\documentclass[11pt,reqno]{amsart}

\usepackage{amsfonts,amssymb,amsmath,mathrsfs,graphicx}
\usepackage{float}
\usepackage{epsfig}
\usepackage{subcaption}
\usepackage{graphicx,colortbl,here}
\usepackage{soul}
\usepackage{tikz}
\usetikzlibrary{calc}
\usetikzlibrary{decorations.pathmorphing}
\allowdisplaybreaks
\makeatletter

\usepackage{cite}
\usepackage[export]{adjustbox}

\numberwithin{equation}{section}

\newcommand{\defhighlighter}[3][]{%
\tikzset{every highlighter/.style={color=#2, fill opacity=#3, #1}}%
}

\defhighlighter{yellow}{.5}

\newcommand{\highlight@DoHighlight}{
\fill [ decoration = {random steps, amplitude=1pt, segment length=15pt}
, outer sep = -15pt, inner sep = 0pt, decorate
, every highlighter, this highlighter ]
($(begin highlight)+(0,8pt)$) rectangle ($(end highlight)+(0,-3pt)$) ;
}

\newcommand{\highlight@BeginHighlight}{
\coordinate (begin highlight) at (0,0) ;
}

\newcommand{\highlight@EndHighlight}{
\coordinate (end highlight) at (0,0) ;
}

\newdimen\highlight@previous
\newdimen\highlight@current

\DeclareRobustCommand*\highlight[1][]{%
\tikzset{this highlighter/.style={#1}}%
\SOUL@setup
\def\SOUL@preamble{%
\highlight@BeginHighlight
\highlight@EndHighlight
\end{tikzpicture}%
}%
\def\SOUL@postamble{%
\begin{tikzpicture}[overlay, remember picture]
\highlight@EndHighlight
\highlight@DoHighlight
\end{tikzpicture}%
}%
\def\SOUL@everyhyphen{%
\discretionary{%
\SOUL@setkern\SOUL@hyphkern
\SOUL@sethyphenchar
\tikz[overlay, remember picture] \highlight@EndHighlight ;%
}{%
}{%
\SOUL@setkern\SOUL@charkern
}%
}%
\def\SOUL@everyexhyphen##1{%
\SOUL@setkern\SOUL@hyphkern
\hbox{##1}%
\discretionary{%
\tikz[overlay, remember picture] \highlight@EndHighlight ;%
}{%
}{%
\SOUL@setkern\SOUL@charkern
}%
}%
\def\SOUL@everysyllable{%
\begin{tikzpicture}[overlay, remember picture]
\path let \p0 = (begin highlight), \p1 = (0,0) in \pgfextra
\global\highlight@previous=\y0
\global\highlight@current =\y1
\endpgfextra (0,0) ;
\ifdim\highlight@current < \highlight@previous
\highlight@DoHighlight
\highlight@BeginHighlight
\fi
\end{tikzpicture}%
\the\SOUL@syllable
\tikz[overlay, remember picture] \highlight@EndHighlight ;%
}%
\SOUL@
}
\makeatother

\title[]{Finite-time flocking of an infinite set of  Cucker-Smale particles with sublinear velocity couplings}

\author[Ha]{Seung-Yeal Ha}
\address[Seung-Yeal Ha]{\newline Department of Mathematical Sciences and Research Institute of Mathematics, \newline
Seoul National University, Seoul, 08826, Republic of Korea}
\email{syha@snu.ac.kr}

\author[Wang]{Xinyu Wang}
\address[Xinyu Wang]{\newline Department of Mathematical Sciences, \newline
	Seoul National University, Seoul, 08826, Republic of Korea\newline and School of Mathematics, Harbin Institute of Technology, Harbin  150001, China}
\email{wangxinyu97@snu.ac.kr}

\author[Zeng]{Fanqin Zeng}
\address[Fanqin Zeng]{\newline Department of Mathematical Sciences, \newline Seoul National University, Seoul 08826,  Republic of Korea\newline and College of Mathematical Sciences, Harbin Engineering University, Harbin 150001, China}
\email{fqzeng93@snu.ac.kr}

\subjclass[2010]{93A16,91D30,93D40}
\keywords{Cucker-Smale model, finite-time fiocking, infinite system, nonlinear coupling, sender network}
\thanks{The work of S.-Y. Ha, X. Y. Wang and F. Q. Zeng are supported by National Research Foundation(NRF) grant funded by the Korea government(MIST) (RS-2025-00514472). The work of X. Y. Wang is also supported by the Natural Science Foundation of China (grants  123B2003), the China Postdoctoral Science Foundation (grants 2025M774290), and Heilongjiang Province Postdoctoral Funding (grants  LBH-Z24167).}

\begin{document}

\newtheorem{theorem}{Theorem}[section]
\newtheorem{lemma}{Lemma}[section]
\newtheorem{corollary}{Corollary}[section]
\newtheorem{proposition}{Proposition}[section]
\newtheorem{remark}{Remark}[section]
\newtheorem{definition}{Definition}[section]
\newtheorem{example}{Example}[section]
\renewcommand{\theequation}{\thesection.\arabic{equation}}
\renewcommand{\thetheorem}{\thesection.\arabic{theorem}}
\renewcommand{\thelemma}{\thesection.\arabic{lemma}}
\newcommand{\bbr}{\mathbb R}
\newcommand{\bbz}{\mathbb Z}
\newcommand{\bbn}{\mathbb N}
\newcommand{\bbs}{\mathbb S}
\newcommand{\bbp}{\mathbb P}
\newcommand{\kp}{\kappa}
\newcommand{\mi}{\mathrm i}
\newcommand{\ddiv}{\textrm{div}}
\newcommand{\bn}{\bf n}
\newcommand{\rr}[1]{\rho_{{#1}}}
\newcommand{\thh}{\theta}

\newcommand{\di}{\mathrm d}

\def\charf {\mbox{{\text 1}\kern-.24em {\text l}}}
\renewcommand{\arraystretch}{1.5}

\begin{abstract}
We study finite-time flocking for an infinite set of Cucker-Smale particles with sublinear velocity coupling under fixed and switching sender networks. For this, we use a component-wise diameter framework and exploit sub-linear dissipation mechanisms, and derive sufficient conditions for finite-time flocking equipped with explicit alignment-time estimate. For a fixed sender network, we establish component-wise finite-time flocking results under both integrable and non-integrable communication weights. When communication weight function is non-integrable, finite-time flocking is guaranteed for any bounded initial configuration. We further extend the flocking analysis to switching sender networks and show that finite-time flocking persists under mild assumptions on the cumulative influence
of time-varying sender weights. The proposed framework is also applicable to both finite and infinite systems, and it yields alignment-time estimates that do not depend on the number of agents.
\end{abstract}

\maketitle

%%%%%%%%%%%%%%%%%%%%%%%%%%%%%%%%%%%%%%%%%%%%%%%%%%%%%%%%%%%%%%%%%%%%%%%%%%%%%%%%%%%%%%%%%%%%%%%%%%%%%%%
%
%  Section
%
%%%%%%%%%%%%%%%%%%%%%%%%%%%%%%%%%%%%%%%%%%%%%%%%%%%%%%%%%%%%%%%%%%%%%%%%%%%%%%%%%%%%%%%%%%%%%%%%%%%%%%

\section{Introduction} \label{sec:1}
\setcounter{equation}{0} 
Collective dynamics in multi-agent systems have attracted attention due
to diverse applications in robot coordination, formation control, biological swarms,
and social dynamics; see, e.g., \cite{B-H,C-S,D-Q,J-L-M,P-E-G,R,T-T,V2,Y-X-L-Z,Z-L-Z} and references therein.
Among various models, the Cucker-Smale (CS) model is a 
paradigmatic example for flocking which demonstrates how simple interaction rules can
generate coherent collective motion \cite{C-S,C-D}. So far, most literature on  the CS-type systems is confined to \emph{asymptotic} velocity
alignment, where agents converge to a common velocity, as time tends to infinity
under suitable conditions on the communication weight and interaction topology
\cite{C-S,C-D,L-X,Shen}.
In many control-oriented and safety-critical applications, however, it is desirable
to achieve consensus or flocking within a \emph{finite} time horizon, ideally with
explicit convergence-time guarantees. This has stimulated extensive research \cite{B-B,C-R,G-G-V-C-S-R,H-W-W-S,J-W,L-W-L-L,M,R-L-L-W,W-L-Y-C-S,W-Y-Y-L,Y-W,Z-N-S-S} on finite-time and fixed-time
consensus or flocking for finite-dimensional multi-agent systems, including scenarios
with switching graphs and robustness requirements.

Finite-time convergence is typically driven by nonlinear dissipation mechanisms that become increasingly effective near consensus, including sublinear, non-Lipschitz, or even discontinuous couplings, as well as intrinsic dynamics (see \cite{B-B,C-R,G-G-V-C-S-R,H-W-W-S,J-W,L-W-L-L,R-L-L-W,W-L-Y-C-S,W-Y-Y-L}). For the CS-type models with a finite number of particles, finite-time flocking has been also investigated by introducing suitable nonlinear velocity couplings and analyzing the resulting dynamics via an energy method; see \cite{H-H-K1,H-H-K,L-W-F-R}.
In contrast, when  we consider infinite particle systems as in \cite{B,H-L,W-X,w5}, the corresponding phase space becomes an infinite-dimensional Banach space. In this setting, well-posedness issue of an infinite particle system is typically established through the Picard-Lindel\"of framework, while Lyapunov functionals combined with standard spectral arguments are employed to characterize the asymptotic collective behavior.

In this paper, we study the finite-time flocking behavior of an infinite-particle CS-type system with nonlinear coupling under sender network. While non-Lipschitz couplings can force velocity differences to vanish in finite time, they
also create major analytical obstacles.
The non-Lipschitz behavior near the origin invalidates standard Picard-Lindel\"of
arguments, and uniqueness or stability estimates based on routine Gr\"onwall inequalities
are generally unavailable. In addition to the nonlinearity, the present work is concerned with two structural
features that further complicate the analysis. To set up the stage, we introduce an infinite CS model (ICS) for a countable ensemble equipped with a mass profile $\{ m_i \}_{i\in\mathbb{N}}$.
In this regime, the natural phase spaces (such as $\ell^\infty$ or weighted $\ell_m^2$)
are infinite-dimensional and bounded sets are not relatively compact. This makes the global existence of classical solutions for an infinite system challenging from the analytical point of view.  Now, we return to the network topologies. Many realistic systems exhibit directed and hierarchical information flows. In this work, we mainly deal with \emph{sender networks}, a structured class of directed graphs
with a leader-type organization: influence is propagated from a sender particle to a target particle, and influence weight is only dependent on the sender particle independent of receiving particles. Thus, the  sender network topology is asymmetric. It models broadcast-like
communication and leader-follower coordination mechanisms.
Compared with undirected or balanced graphs, the lack of reciprocity prevents the
use of symmetric Lyapunov functionals and standard spectral arguments. These issues become more pronounced under switching interactions.

Let $(x_i,v_i)\in\mathbb{R}^{2d}$ be the phase space coordinate of the $i$-th particle whose dynamics is governed by the following Cauchy problem to the ICS model:
\begin{equation}\label{ICS}
	\left\{
	\begin{aligned}
		&\frac{\di x_i}{\di t} = v_i,\quad t>0,\quad i\in\mathbb{N} := \{1,2, 3, \cdots \},\\
		&\frac{\di v_i}{\di t} = \kappa \sum_{j=1}^{\infty} m_{j}\, \psi\bigl(\|x_j - x_i\|\bigr)\Gamma (v_j - v_i),\\
		&(x_i, v_i) \Big|_{t= 0} = (x_{i}^{\rm in},v_{i}^{\rm in}),
	\end{aligned}
	\right.
\end{equation}
where $\kappa$, $\|\cdot\|$ are the nonnegative coupling strength, the standard Euclidean norm on $\mathbb{R}^d$, respectively and we assume that weight sequence $(m_i)$ satisfies 
\begin{equation*}
{\mathfrak M} = \sum_{j=1}^{\infty} m_j = 1, \quad m_j\ge0,\quad \forall~j\in\mathbb{N}.
\end{equation*}
Moreover, we assume that the communication weight and nonlinear velocity coupling function satisfy structural conditions $({\mathcal A})$:
\begin{itemize}
\item
$(\mathcal{A}_1)$: The communication weight function 
$\psi: [0,\infty) \to [0,\infty)$ 
is Lipschitz continuous and non-increasing:
\[
0\le \psi(r) \le \psi(0)  <\infty, \quad  \forall~r \geq 0, \quad 
(r_2 - r_1) (\psi(r_{2}) - \psi(r_{1})) \leq 0, \quad r_1, r_2 \geq 0.
\]
\item
$(\mathcal{A}_2)$: 
The nonlinear coupling function $\Gamma(v):~\mathbb{R}^d\to\mathbb{R}^d$ is defined as follows: for $\forall~v=(v^1, \cdots, v^d)\in\mathbb{R}^d$ and $\alpha \in (0,1)$, 
\begin{equation}\label{eq:Gamma}
	\Gamma(v)=\left(\operatorname{sgn}(v^{1})\,|v^{1}|^\alpha,\cdots,\operatorname{sgn}(v^{d}) \,|v^d|^\alpha\right) =: (\Gamma^{(1)}(v^1), \cdots, \Gamma^{(d)}(v^d)),
\end{equation}
\end{itemize}
where $\mbox{sgn}(x)$ is the sign function of $x \in \bbr$:
\[ \mbox{sgn}(x) := \begin{cases}
1, \quad & x > 0, \\
0, \quad & x = 0, \\
-1, \quad & x < 0.
\end{cases}
\]
Next, we recall the concept of finite-time flocking as follows. 
\begin{definition}
	System \eqref{ICS} is said to exhibit finite-time flocking if there exists a finite time
	$t_f>0$ such that the following relations hold: 
		\begin{equation} \label{A-1}
		\sup_{i,j\in\mathbb N}\|v_i(t)-v_j(t)\|=0,\quad \forall~ t\ge t_f, \quad \sup_{t\ge0}\sup_{i,j\in\mathbb N}\|x_i(t)-x_j(t)\|<\infty.
                  \end{equation}
Here, the smallest time $t_f$ satisfying \eqref{A-1} is referred to as the flocking time throughout the paper.
\end{definition}
In this paper, we address the following question:
\begin{quote}
``Under what conditions on the initial data and system parameters, does the Cauchy problem \eqref{ICS} exhibit finite-time flocking? "
\end{quote}
The purpose of this paper is to provide a sufficient framework leading to finite-time flocking. More precisely, we briefly delineate the following proof strategy in two steps: 
\vspace{0.1cm}
\begin{itemize}
\item 
Step A: We first establish a global existence of classical solutions for initial data in $\mathcal{B}=\ell^\infty(\mathbb{R}^d)\times\ell^\infty(\mathbb{R}^d)$ by applying the Schauder-Tychonoff fixed point theory in a suitable Hilbert space $\mathcal{H}=\ell_m^2(\mathbb{R}^d)\times\ell_m^2(\mathbb{R}^d)$. We use the summability $\sum_{i=1}^\infty m_i<\infty$ to obtain the compactness mechanism needed for infinite-dimensional existence theory: bounded sets
in $\ell^\infty(\mathbb{R}^d)$ are relatively compact in the weighted space
$\ell_m^2(\mathbb{R}^d)$, yielding a compact embedding $\mathcal{B}\hookrightarrow\mathcal{H}$.
Based on this compactness and a suitable a priori bound and continuity of the vector field force, we formulate an appropriate solution map and apply fixed-point arguments to obtain a global-in-time classical
solutions to \eqref{ICS}; see Appendices \ref{App-A} and \ref{App-B}.
\vspace{0.1cm}
\item 
Step B: For the finite-time flocking, the core of the analysis is a \emph{componentwise diameter}
framework. Instead of relying on energy or radius methods, we track the evolution of the velocity
diameter in each component and derive differential inequalities in the sense of Dini
derivatives. This approach directly captures the finite-time decay induced by the sublinear
dissipation and yields explicit upper bounds for the \emph{flocking time}.
The resulting criteria apply uniformly to both finite and infinite systems and remain
meaningful as the number of agents grows.
A key consequence is that, under sender networks, the flocking-time bounds depend only
weakly on the population size and are instead governed by an aggregate measure of
communication effectiveness (or total effective mass) along the directed influence
structure.
We further show that finite-time flocking is robust under switching sender networks
under mild cumulative influence conditions. 
\end{itemize}

The main results of this paper are two-fold. First, we establish finite-time flocking for \eqref{ICS} with non-Lipschitz sublinear velocity coupling under
	sender networks (see Theorem \ref{T3.1}). For this, we develop a componentwise diameter framework based on Dini derivatives and
	sublinear dissipation inequalities to get the explicit finite-time alignment criteria
	and flocking-time estimates. Second, we extend the analysis to switching sender networks and quantify robustness via cumulative influence conditions (see Theorem \ref{T4.1}). The flocking-time bounds apply uniformly to finite and infinite systems and depend primarily on communication effectiveness/total effective mass rather than
	strongly on the number of agents. \newline

The rest of the paper is organized as follows. In Section \ref{sec:2}, we  present preparatory estimates and technical tools. In Section~\ref{sec:3}, we establish finite-time flocking under fixed sender networks.
In Section~\ref{sec:4}, we  extend the analysis to switching sender networks. In Section \ref{sec:5}, we provide several numerical illustrations. Finally, Section \ref{sec:6} is devoted to a brief summary of our main results. In Appendices \ref{App-A} and \ref{App-B}, we present the functional-analytic preliminaries and the proofs of global existence of classical solutions that are omitted from the main text for readability.\newline

\noindent {\bf Gallery of Notation}: We denote
\[
\|v\|_\infty:=\sup_{i\ge 1}\|v_i\|, \ \forall~v\in\ell^\infty(\mathbb{R}^d)=:\{v=(v_i)_{i\in\mathbb N}: v_i\in \mathbb R^d ;\|v\|_\infty<\infty\}.\] 
For a sequence $x=(x_i)_{i\in\mathbb N}$ with $x_i\in\mathbb R^d$, we define the corresponding weighted $\ell_m^2$-space:
\[
\|x\|_{\ell_m^2}
:= \left( \sum_{i=1}^\infty m_i \|x_i\|^2 \right)^{1/2}, \quad 
\ell_m^2(\mathbb R^d)
:= \bigl\{ x=(x_i)_{i\in\mathbb N}: x_i\in \mathbb R^d ;
\|x\|_{\ell_m^2} < \infty \bigr\}.
\]
We set
 \begin{align*}
 \begin{aligned}
& \mathcal{B}:= \ell^\infty(\mathbb{R}^d) \times \ell^\infty(\mathbb{R}^d),
\quad  \mathcal{H}:= \ell_m^2(\mathbb{R}^d) \times \ell_m^2(\mathbb{R}^d), \quad  \|\cdot\|_\mathcal{B}:= \|\cdot\|_\infty + \|\cdot\|_\infty, \\
& \|\cdot\|_{\mathcal{H}}:= \|\cdot\|_{\ell_m^2}+ \|\cdot\|_{\ell_m^2}, \quad \psi_{ij}(t) :=\psi(\|x_j(t)-x_{i}(t)\|), \\  
& x_i = (x_i^1, \cdots, x_i^d), \quad v_i = (v_i^1, \cdots, v_i^d), \quad  x := (x_1,  x_2, \cdots), \quad v := (v_1,  v_2, \cdots).
\end{aligned}
\end{align*}

\vspace{0.5cm}

\section{Preliminaries}\label{sec:2}
\setcounter{equation}{0}
In this section, we study preparatory estimates for the ICS model \eqref{ICS} to be used later. First, we derive basic regularity and monotonicity properties of the velocity components,
which will be used in the componentwise finite-time flocking analysis. Before we provide several preparatory estimates, we recall the global existence of classical solution to \eqref{ICS}.

\begin{theorem} \label{T2.1}
Suppose that $\psi:[0,\infty)\to[0,\infty)$ and $\Gamma$ satisfy the framework $({\mathcal A}_1) - ({\mathcal A}_2)$. 
	Then, the Cauchy problem \eqref{ICS} admits a global solution $(x,v)$ such that 
	\[
	(x, v) \in C([0,\infty);\mathcal B)\cap C^1((0,\infty);\mathcal H).
	\]
\end{theorem}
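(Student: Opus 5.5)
The plan is to follow Step A of the introduction. The approach is to obtain a priori $\ell^\infty$ bounds, construct the solution map on a convex set that is bounded in $\mathcal B$ and compact in $\mathcal H$, apply the Schauder--Tychonoff theorem on a finite horizon $[0,T]$, and then extend to $[0,\infty)$.

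\textbf{A priori bound.} Fix $T>0$ and set $V_0:=\|v^{\rm in}\|_\infty$. For each component $k$ and each $i$, the sum $\sum_j m_j\psi_{ij}\Gamma^{(k)}(v_j^k-v_i^k)$ is a nonnegative-weight combination of terms carrying the sign of $v_j^k-v_i^k$. Hence $\sup_i v_i^k$ is nonincreasing and $\inf_i v_i^k$ is nondecreasing, so $\|v(t)\|_\infty\le \sqrt d\,V_0$. This also follows more crudely from the bound $|\Gamma^{(k)}(s)|\le |s|^\alpha\le 1+|s|$, which gives $\|\dot v_i\|\le \kappa\psi(0)\sqrt d\,(1+2\|v\|_\infty)$ together with Gr\"onwall. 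Consequently $\|x(t)\|_\infty\le \|x^{\rm in}\|_\infty+tC$.

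\textbf{Fixed-point map.} Let $R$ be larger than these bounds. Define $\mathcal K$ as the set of $(x,v)\in C([0,T];\mathcal H)$ with $\sup_t\|x(t)\|_\infty\le R$, $\sup_t\|v(t)\|_\infty\le R$, and $\mathcal H$-Lipschitz constant in $t$ at most $L$. This set is convex. It is compact in $C([0,T];\mathcal H)$ by Arzel\`a--Ascoli, since by the summability $\sum_i m_i=1$, bounded sets of $\ell^\infty$ are relatively compact in $\ell^2_m$: truncate to finitely many indices and control the tail by $R^2\sum_{i>N}m_i$. It is also closed, because $\ell^\infty$-balls are closed in $\ell^2_m$ by passage to componentwise limits.

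Given $(y,w)\in\mathcal K$, define $\mathcal T(y,w)=(x,v)$ by
\[
x_i(t)=x_i^{\rm in}+\int_0^t w_i\,ds,\qquad v_i(t)=v_i^{\rm in}+\kappa\int_0^t\sum_j m_j\psi(\|y_j-y_i\|)\Gamma(w_j-w_i)\,ds.
\]
The bounds show that $\mathcal T(\mathcal K)\subset\mathcal K$ for $T$ small relative to $R$. In the $v$-equation I use $\|v(t)\|_\infty\le V_0+\kappa\psi(0)\sqrt d(2R)^\alpha t$, and the sublinear growth lets me choose $R$ large for any given $T$. Iterating over time intervals then handles general $T$.

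\textbf{Continuity of $\mathcal T$ in the $\mathcal H$-topology.} This is the main obstacle, because $\Gamma$ is only $\alpha$-H\"older and the topology is the weak $\ell^2_m$ one. I would use $|\Gamma^{(k)}(a)-\Gamma^{(k)}(b)|\le 2^{1-\alpha}|a-b|^\alpha$ together with the Lipschitz property of $\psi$. Each difference then splits into $\sum_j m_j(|y_j-\tilde y_j|+|y_i-\tilde y_i|)$ times a bounded factor, plus $\sum_j m_j(|w_j-\tilde w_j|^\alpha+|w_i-\tilde w_i|^\alpha)$ times $\psi(0)$. Sums of the form $\sum_j m_j|w_j-\tilde w_j|^\alpha$ are bounded via H\"older (Jensen, since $\sum m_j=1$) by $\|w-\tilde w\|_{\ell^2_m}^\alpha$. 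The $i$-dependent terms are controlled in $\ell^2_m$ by $\|y-\tilde y\|_{\ell^2_m}$ and by $\big(\sum_i m_i|w_i-\tilde w_i|^{2\alpha}\big)^{1/2}\le\|w-\tilde w\|_{\ell^2_m}^\alpha$. This gives H\"older continuity of $\mathcal T$ on $\mathcal K$.

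\textbf{Conclusion.} Schauder--Tychonoff yields a fixed point, which is a mild solution on $[0,T]$. Its integrand is continuous in $\mathcal H$, so $v\in C^1((0,T];\mathcal H)$, and similarly for $x$; continuity in $\mathcal B$ follows from the uniform Lipschitz bounds of each component in $t$. Uniqueness is not needed. Global existence follows either by concatenating intervals using the $T$-independent velocity bound, or by a diagonal argument as $T\to\infty$ using the compactness of $\mathcal K$.
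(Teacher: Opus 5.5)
Your overall route is the paper's: Schauder--Tychonoff in $C([0,T];\text{weighted }\ell^2)$, relative compactness from summability of the weights, an $\alpha$-H\"older estimate of the vector field on $\ell^\infty$-bounded sets, and a data-independent time step iterated to $[0,\infty)$. Building the Lipschitz-in-time bound into a compact convex $\mathcal K$, rather than using the paper's closed convex $\mathcal S_{T,R}$ with relatively compact image, is an inessential variation.

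There is one concrete gap. You run the argument in $\mathcal H=\ell^2_m\times\ell^2_m$ with the original weights, but the theorem only assumes $m_j\ge 0$. If some $m_i=0$, then $\|\cdot\|_{\ell^2_m}$ is only a seminorm, and $\ell^2_m$-convergence says nothing about the $i$-th component. Your claim that $\mathcal K$ is closed ``because $\ell^\infty$-balls are closed in $\ell^2_m$ by passage to componentwise limits'' then fails. Changing a zero-mass component costs nothing in the seminorm, so a sequence with that component equal to $2R$ lies in the closure of the $R$-ball. The space is also not Hausdorff. At best Schauder's theorem gives $\|u-\mathcal T u\|_{C([0,T];\mathcal H)}=0$, i.e.\ a fixed point modulo the null space of the seminorm. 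The equations for the zero-mass particles, which are exactly the pure receivers of a sender network, are therefore never established.

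The paper handles this by putting the topology on $\tilde{\mathcal H}=\ell^2_{\tilde m}\times\ell^2_{\tilde m}$ with $\tilde m_i:=m_i+2^{-i}>0$, while keeping $m_j$ in the vector field. This repairs each step of your argument:
\begin{enumerate}
\item $\tilde m$ is still summable, so the compact embedding and Arzel\`a--Ascoli go through.
\item $\tilde m_i>0$ makes your componentwise-limit argument valid.
\item $m_j\le\tilde m_j$ lets your H\"older bounds carry over, with $\sum_i\tilde m_i=2$ in place of $1$ in the Jensen steps; only constants change.
\item $\|\cdot\|_{\ell^2_m}\le\|\cdot\|_{\ell^2_{\tilde m}}$, so $C^1((0,\infty);\tilde{\mathcal H})$ regularity implies $C^1((0,\infty);\mathcal H)$.
\end{enumerate}
An equivalent repair is to solve first the closed subsystem of positive-mass particles. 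Then treat each zero-mass particle as a finite-dimensional ODE with continuous right-hand side via Peano, using the uniform velocity and Lipschitz bounds to recover continuity in $\mathcal B$. With either modification, or under the extra assumption $m_i>0$ for all $i$, your argument is complete.
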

\begin{proof} Since the proof is very lengthy, we leave its proof in Appendix \ref{App-B-2}. 
\end{proof}

\subsection{Preparatory estimates} \label{sec:2.1}
In this subsection, we study several useful estimates to be used in later sections. For given velocity configuration $v := \{ v_i \}$ and $k \in [d]$, we set 
\[ M^{(k)}:=\sup_{i\ge1} v_i^{k}, \quad  m^{(k)}:=\inf_{i\ge1} v_i^{k}, \quad  {\mathcal D}_v^{(k)}:=M^{(k)}-m^{(k)}. \]
Then, it is easy to see that 
\[ {\mathcal D}_v^{(k)} = \sup_{i,j\ge1} |v_i^{k}-v_j^{k}|. \]
In what follows, without loss of generality, we may assume as a standing assumption that 
\begin{equation} \label{B-0}
\psi(0) = 1, \quad \mbox{i.e.,} \quad \psi(r) \leq 1, \quad r \geq 0. 
\end{equation}
\begin{lemma} \label{L2.1}
For $T \in (0, \infty)$, let $(x,v)\in C([0,T] ;\mathcal B)\cap C^1((0,T); \mathcal H)$
	be a solution to system \eqref{ICS} with initial data
	$(x^{\rm in},v^{\rm in})\in \mathcal{B}$.
	Then, for each $k\in[d]$, the following assertions hold.
	\begin{itemize}
		\item[(i)] Maximal component acceleration $\sup_i|\dot v^{k}_i(t)|$ is bounded on $t\in[0,T]$, i.e.,
		there exists a constant $C_T>0$ such that
		\begin{equation*}\label{diff}
			\sup_{t\in[0,T]}\sup_{i \in {\mathbb N}} |\dot v_i^{k}(t)| \le C_T .
		\end{equation*}
		\item[(ii)] Extremal velocities $t \mapsto
		M^{(k)}$ and
		$t \mapsto m^{(k)}(t)
		$
		are Lipschitz continuous on $[0,T]$ with a uniform Lipschitz constant $C_T$.
\end{itemize}
\end{lemma}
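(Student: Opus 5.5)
The plan is to bound the right-hand side of \eqref{ICS} uniformly in $i$ using the $\ell^\infty$ control of the velocities, and then transfer this bound to the extremal functions $M^{(k)}$ and $m^{(k)}$ through a standard sup/inf argument.

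\emph{Step 1: a uniform bound on the acceleration.} Since $v\in C([0,T];\ell^\infty(\mathbb{R}^d))$ and $[0,T]$ is compact, the quantity
\[
V_T:=\sup_{t\in[0,T]}\|v(t)\|_\infty
\]
is finite. Fix $i$ and $k$. The $k$-th component of the velocity equation reads
\[
\dot v_i^k=\kappa\sum_j m_j\psi_{ij}\,\mathrm{sgn}(v_j^k-v_i^k)|v_j^k-v_i^k|^\alpha .
\]
We use $0\le\psi\le 1$ from \eqref{B-0}, $\sum_j m_j=1$, and $|v_j^k-v_i^k|\le 2V_T$. These give
\[
|\dot v_i^k(t)|\le \kappa (2V_T)^\alpha=:C_T
\]
for every $t\in(0,T)$. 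The series converges absolutely and uniformly in $i$, so this pointwise formula is legitimate.

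One subtlety is that the solution is only $C^1$ with values in $\mathcal H$. We therefore note that the coordinate map $x\mapsto x_i$ is continuous from $\ell^2_m$ to $\mathbb{R}^d$ whenever $m_i>0$. In general it is safer to argue through the integral form
\[
v_i^k(t)=v_i^k(s)+\int_s^t F_i^k\,d\tau .
\]
This form holds componentwise because each component of the solution produced by Theorem \ref{T2.1} satisfies it: the fixed point is built from it. The right-hand side $F_i^k$ is continuous in $\tau$, since $x$ and $v$ are continuous in $\ell^\infty$, $\psi$ is Lipschitz, and $|\cdot|^\alpha$ is uniformly continuous on bounded sets. Hence $v_i^k$ is $C^1$ on $[0,T]$, including the endpoints, with the bound $C_T$. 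This proves (i).

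\emph{Step 2: Lipschitz continuity of the extremal velocities.} From (i), for all $i$ and all $s,t\in[0,T]$,
\[
|v_i^k(t)-v_i^k(s)|\le C_T|t-s| .
\]
Then
\[
v_i^k(t)\le v_i^k(s)+C_T|t-s|\le M^{(k)}(s)+C_T|t-s| .
\]
Taking the supremum over $i$ gives $M^{(k)}(t)\le M^{(k)}(s)+C_T|t-s|$. Exchanging $s$ and $t$ yields $|M^{(k)}(t)-M^{(k)}(s)|\le C_T|t-s|$. The same argument, with inequalities reversed and infima in place of suprema, applies to $m^{(k)}$. Both $M^{(k)}$ and $m^{(k)}$ are finite because $|v_i^k|\le V_T$. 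This proves (ii).

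The main obstacle is the regularity issue in Step 1: justifying that each component $v_i^k$ is genuinely differentiable, with derivative given by the series, uniformly in $i$ up to $t=0$ and $t=T$. I would handle it via the integral formulation and the $\ell^\infty$-continuity of the solution, as above, rather than relying on differentiability in $\mathcal H$.
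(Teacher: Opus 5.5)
Your argument is correct, and part (ii) is the same sup/inf argument as in the paper. The difference is in part (i).

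\textbf{The paper's route.} The paper does not use the $\ell^\infty$ control of $v$ on $[0,T]$. It takes only the bound $\|v(t)\|_{\ell^2_m}\le \tilde C_T$ and applies the crude inequality $|a-b|^\alpha\le |a|+|b|+1$ together with Cauchy--Schwarz to get
\[
|\dot v_i^k|\le \kappa\bigl(|v_i^k|+\tilde C_T\bigr).
\]
It then runs Gr\"onwall particle by particle, ending with $C_T=\kappa(|v_i^k(0)|+\tilde C_T)e^{\kappa T}$. This constant is uniform in $i$ only after replacing $|v_i^k(0)|$ by $\|v^{\rm in}\|_\infty$, a step the paper leaves implicit.

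\textbf{Your route.} You read off $V_T=\sup_{t\in[0,T]}\|v(t)\|_\infty<\infty$ directly from the hypothesis $v\in C([0,T];\ell^\infty)$. This gives the sharper, manifestly $i$-independent constant $\kappa(2V_T)^\alpha$ with no Gr\"onwall step.

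\textbf{Comparison.} The paper's route has the modest merit of producing a uniform bound from only $\mathcal H$-control plus bounded initial data. Since $C([0,T];\mathcal B)$ is assumed anyway, your route is shorter and cleaner. Your handling of regularity is also more careful than the paper's. The paper simply asserts that each $v_i^k$ is absolutely continuous, although $C^1((0,T);\mathcal H)$ says nothing about components with $m_i=0$ or about the endpoints $t=0,T$.

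\textbf{One small correction.} Justify the componentwise integral identity from the definition of a solution of \eqref{ICS}, namely that each $v_i$ satisfies its own equation. Do not justify it from the fixed-point construction in Theorem \ref{T2.1}, because the lemma concerns an arbitrary solution in the stated class. With the continuity of $\tau\mapsto F_i^k(\tau)$ on $[0,T]$ that you checked, the rest of your argument goes through unchanged.
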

\begin{proof} (i) Since $(x,v)\in C([0,T], {\mathcal H})$ and $[0,T]$ is compact, there exists a positive constant $ \tilde{C}_T$ such that 
\[
\|v(t)\|_{\ell_m^2}\le \tilde{C}_T,\quad \forall \ t\in[0,T].
\]
Now, we use  $|\Gamma^{(k)}(s)|=|s|^\alpha,$ and \eqref{B-0} to find 
\begin{equation}\label{dyn-1}
	\begin{aligned}
		|\dot v_i^{k}(t)|
		&\le  \kappa \sum_{j=1}^{\infty} m_j|v_j^{k}(t)-v_i^{k}(t)|^\alpha \le   \kappa \sum_{j=1}^{\infty} m_j(|v_j^{k}(t)|+|v_i^{k}(t)|+1)\\
		&\le   \kappa \left[\left(\sum_{j=1}^{\infty}m_j|v_j^{k}(t)|^2\right)^{\frac{1}{2}}\!\!\!\!\!+\!|v_i^{k}(t)|\!+ 1 \right] \le  \kappa \left(\|v(t)\|_{\ell_{m}^2}+ |v_i^{k}(t)| + 1\right)\\
		&\le   \kappa \left(|v_i^{k}(t)|+\tilde{C}_T\right), \quad \forall \ t\in(0,T), \ \forall \ i\in\mathbb{N},
\end{aligned}\end{equation}
where the second inequality is due to the fact that for $0<\alpha<1$,
\[
|a-b|^\alpha \le (|a|+|b|)^\alpha \le |a|+|b|+1,\quad \forall \ a,b\in\mathbb{R},
\]
the third inequality is due to Cauchy-Schwartz inequality and $\sum_{j=1}^{\infty} m_j=1$. We set 
\[ z(t):=|v_i^{k}(t)|+\tilde{C}_T. \]
Since $v^{k}_i(t)$ is absolutely continuous on $[0,T]$, we have
\[
\dot z(t)\le \kappa z(t), \quad \mbox{a.e.}~t \in (0, T).
\]
Therefore, we use Gr\"onwall's lemma and \eqref{dyn-1} to obtain
that $\forall \ t\in(0,T),$
\[
\sup_{t\in[0,T]} \bigl|v_i^{k}(t)\bigr|+\tilde{C}_T
\le \Bigl(\bigl|v_i^{k}(0)\bigr| + \tilde{C}_T\Bigr)\, e^{ \kappa T}.
\]
This yields
\begin{equation}\begin{aligned}\label{estimate}
		|\dot v_i^{k}(t)|&\le \kappa \left( (\bigl|v_i^{k}(0)\bigr| + \tilde{C}_T ) e^{\kappa T}\right):=C_T.
\end{aligned}\end{equation}

\noindent(ii). It follows from  \eqref{estimate} that 
\[
|v_i^{k}(t)-v_i^{k}(s)|\le C_T|t-s|, \quad \forall ~s,t\in[0,T],~~i\in\mathbb{N}.
\]
We take supremum over $i$ to get
\[
\sup_{i\ge1}|v_i^{k}(t)-v_i^{k}(s)| \le C_T|t-s|,\quad \forall \ s,t\in[0,T].
\]
By definition of $M^{(k)}(t)$, we have
\[
\begin{aligned}
	M^{(k)}(t)
	&= \sup_{i\ge1} v_i^{k}(t)
	\le \sup_{i\ge1}\bigl( v_i^{k}(s) + |v_i^{k}(t)-v_i^{k}(s)| \bigr)  \le \sup_{i\ge1} v_i^{k}(s) + \sup_{i\ge1}|v_i^{k}(t)-v_i^{k}(s)|\\
	&= M^{(k)}(s) + \sup_{i\ge1}|v_i^{k}(t)-v_i^{k}(s)|.
\end{aligned}
\]
We also interchange the roles of $t$ and $s$ to see
\[
M^{(k)}(s) \le M^{(k)}(t) + \sup_{i\ge1}|v_i^{k}(t)-v_i^{k}(s)| ,
\]
which implies the Lipschitz continuity of the map $t \mapsto M^{(k)}(t)$ with a Lipschitz constant $C_T$:
\[
|M^{(k)}(t)-M^{(k)}(s)|\le \sup_{i\ge1}|v_i^{k}(t)-v_i^{k}(s)|\le C_T|t-s|
.\]
We can use the same argument to see the Lipschitz continuity of the map $t \mapsto m^{(k)}(t)$:
\[|m^{(k)}(t)-m^{(k)}(s)|\le \sup_{i\ge1}|v_i^{k}(t)-v_i^{k}(s)|\le C_T|t-s|.\]
\end{proof}

\vspace{0.2cm}

\begin{remark}\label{R2.1}
	\begin{enumerate}
		\item From the monotonicity properties of the velocity components $M^{(k)}$ and $m^{(k)}$ in Lemma \ref{L2.1}, we have a rough estimate of $\|v\|_\infty$:
		\begin{equation*}
			\|v(t)\|_\infty\le\|v^{\rm in}\|,\quad\forall \ 
			t\ge 0.
		\end{equation*}
		Due to the natural relation between $x$ and $v$ in \eqref{ICS} we have 
		\begin{equation}\label{rough}
			\begin{aligned}
				&\sup_{i,j}\|x_{i}(t)-x_j(t)\|\le\sup_{i,j}\|x_{i}^{\rm in}-x_j^{\rm in}\|+\sup_{i,j}\|v_{i}^{\rm in}-v_j^{\rm in}\|t=:a+bt.	\end{aligned}\end{equation}
		\item From \eqref{rough}, we have a rough positive lower bound of communication weight
		\[
		\inf_{i,j} \psi_{ij}(t)\ge\frac{1}{(1+(a+bt)^2)^{\beta}},\quad\forall \ 
		t\ge0.
		\]
	\end{enumerate}
\end{remark}
\vspace{0.2cm}

Next, we introduce the mass-weighted mean velocity and establish its conservation.
Define the mass-weighted mean velocity
\begin{equation}\label{eq:vc_def_lem}
	v_c(t):= \sum_{i=1}^{\infty} m_iv_i(t).
\end{equation}
\begin{lemma} \label{lem:mean_velocity}
	Let $(x,v)\in C([0,\infty);\mathcal B)\cap C^1((0,\infty);\mathcal H)$
	be a global solution to system \eqref{ICS} with initial data
	$(x^{\rm in},v^{\rm in})\in \mathcal{B}$. Then $v_c(t)$ is conserved along the dynamics \eqref{ICS}:
	\[
	\frac{\di}{\di t}v_c(t)=\textbf{\rm 0}, \quad \mbox{i.e.,} \quad  v_c(t)\equiv v_c(0), \quad t > 0.
	\]
\end{lemma}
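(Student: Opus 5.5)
The plan is to differentiate $v_c$ term by term, using that it is a bounded linear functional of $v$, and then use the antisymmetry of the double-sum interaction to conclude that the derivative vanishes.

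\emph{Step 1: $v_c$ is a bounded linear functional.} The map $L: \ell_m^2(\mathbb R^d)\to\mathbb R^d$, $L(v):=\sum_{i} m_i v_i$, is linear and bounded. By Cauchy--Schwarz and $\sum_i m_i=1$,
\[
\Bigl\|\sum_i m_i v_i\Bigr\| \le \Bigl(\sum_i m_i\Bigr)^{1/2}\Bigl(\sum_i m_i\|v_i\|^2\Bigr)^{1/2}=\|v\|_{\ell_m^2}.
\]
It is also bounded on $\ell^\infty(\mathbb R^d)$, with $\|L(v)\|\le \|v\|_\infty$. The solution satisfies $v\in C^1((0,\infty);\ell_m^2)$, so $v_c=L\circ v$ is $C^1$ on $(0,\infty)$ with $\dot v_c(t)=L(\dot v(t))=\sum_i m_i \dot v_i(t)$. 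The solution also satisfies $v\in C([0,\infty);\ell^\infty)$, so $v_c$ is continuous on $[0,\infty)$. It therefore suffices to show $\sum_i m_i\dot v_i(t)=0$ for every $t>0$. The identification $\dot v(t)=(\dot v_i(t))_i$, with each $\dot v_i$ given by the right-hand side of \eqref{ICS}, holds because the $i$-th coordinate projection is continuous on $\ell_m^2$ whenever $m_i>0$. Indices with $m_i=0$ contribute nothing to the sum.

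\emph{Step 2: absolute convergence of the double series.} Fix $t>0$ and substitute the equation:
\[
\sum_{i}m_i\dot v_i=\kappa\sum_{i}\sum_{j} m_im_j\,\psi_{ij}\,\Gamma(v_j-v_i).
\]
For each component $k$, using $\psi\le 1$ from \eqref{B-0} and $|\Gamma^{(k)}(s)|=|s|^\alpha$,
\[
\bigl|m_im_j\psi_{ij}\Gamma^{(k)}(v_j^k-v_i^k)\bigr|\le m_im_j\,(2\|v(t)\|_\infty)^\alpha .
\]
Here $\|v(t)\|_\infty<\infty$ because $v(t)\in\ell^\infty$. Since $\sum_{i,j}m_im_j=1$, the double series converges absolutely, so Fubini's theorem for series allows rearranging and interchanging the order of summation.

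\emph{Step 3: antisymmetry.} Two facts give antisymmetry of the summand under $i\leftrightarrow j$. First, $\psi_{ij}=\psi(\|x_j-x_i\|)=\psi_{ji}$. Second, $\Gamma$ is odd, since each $\Gamma^{(k)}(s)=\operatorname{sgn}(s)|s|^\alpha$ is odd, so $\Gamma(v_i-v_j)=-\Gamma(v_j-v_i)$. Relabel $i\leftrightarrow j$, which Step 2 permits:
\[
S:=\sum_{i,j}m_im_j\psi_{ij}\Gamma(v_j-v_i)=\sum_{i,j}m_jm_i\psi_{ji}\Gamma(v_i-v_j)=-S.
\]
Hence $S=0$, so $\dot v_c(t)=0$ for all $t>0$. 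Continuity of $v_c$ at $t=0$ then gives $v_c(t)\equiv v_c(0)$.

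The main obstacle is purely technical. One must justify differentiating the infinite sum term by term, which Step 1 handles through the bounded functional $L$ and the $C^1$ regularity in $\mathcal H$. One must also justify the interchange of the double sum, which Step 2 handles through the uniform $\ell^\infty$ bound.
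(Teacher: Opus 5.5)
Your proposal is correct and follows the same route as the paper: differentiate $v_c$ term by term, substitute the equation, and use $\psi_{ij}=\psi_{ji}$ together with the oddness of $\Gamma$ under the swap $(i,j)\leftrightarrow(j,i)$ to get $S=-S$. Your Steps 1 and 2 supply the justifications that the paper's one-line argument leaves implicit, namely boundedness of $L$ on $\ell_m^2$ for termwise differentiation and absolute convergence of the double series for the relabeling.
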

\begin{proof} We differentiate \eqref{eq:vc_def_lem} and use \eqref{ICS}, $(i,j) ~\leftrightarrow ~ (j,i)$, and $\Gamma(v)=-\Gamma(-v)$ to obtain the desired estimate:
\begin{equation*}\label{dy:v_c}
		\frac{\di}{\di t}v_c(t)
		= \kappa \sum_{i=1}^{\infty} m_i\,\dot v_i(t) = \kappa \sum_{i=1}^{\infty}\sum_{j=1}^{\infty} m_i m_j\,\psi_{ij}(t)\,\Gamma\big(v_j(t)-v_i(t)\big) = 0.
\end{equation*} 
\end{proof}
Finally, we recall a differential inequality to be used to obtain finite-time decay estimates of component diameter functional. 

\begin{lemma}\label{L2.3}
For $\alpha\in(0,1)$, let $y:[0,\infty)\to[0,\infty)$ be a continuous function satisfying
	\begin{equation}\label{eq:dy-sublinear}
		D^+ y(t)\le -k(t)\,y(t)^\alpha,
		\quad\text{for a.e. }t > 0,
	\end{equation}
	where $k\in L^1_{\mathrm{loc}}([0,\infty))$ such that $k(t)\ge 0$ a.e.. Then, if there exists $T^*\le\infty$ such that 
	\[
	\int_0^{T^*} k(s)\,\di s = \frac{y(0)^{1-\alpha}}{1-\alpha},
	\]
 we have
	\begin{equation*}\label{eq:explicit-bound}
		y(t)\le \Bigl(y(0)^{1-\alpha}-(1-\alpha)\int_0^t k(s)\,\di s\Bigr)^{\frac1{1-\alpha}},\quad \forall~t< T^*.
	\end{equation*}
In particular, if $T^*<\infty$, then we have
\[ y(t)=0 \quad \mbox{for all $t\ge T^*$}. \]
\end{lemma}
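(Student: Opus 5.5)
The idea is to pass to the power $z(t):=y(t)^{1-\alpha}$, turning the sublinear inequality into a linear one: on $\{y>0\}$ we expect $D^+z(t)\le -(1-\alpha)k(t)$. This gives a bound by the integral of $k$, and the result follows once we justify integrating a Dini-derivative inequality that holds only almost everywhere.

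\textbf{Chain rule for the Dini derivative.} Fix $t$ with $y(t)>0$ at which \eqref{eq:dy-sublinear} holds. The map $\phi(s)=s^{1-\alpha}$ is $C^1$ and increasing near $y(t)>0$. Writing
\[
\frac{z(t+h)-z(t)}{h}=\frac{\phi(y(t+h))-\phi(y(t))}{y(t+h)-y(t)}\cdot\frac{y(t+h)-y(t)}{h}
\]
(or noting the quotient is $0$ when $y(t+h)=y(t)$), and using continuity of $y$, we get
\[
D^+z(t)=\phi'(y(t))\,D^+y(t)\le (1-\alpha)y(t)^{-\alpha}\bigl(-k(t)y(t)^\alpha\bigr)=-(1-\alpha)k(t).
\]
When $D^+y(t)=-\infty$ this stays valid, since $\phi'>0$.

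\textbf{Integrating on an interval where $y>0$.} Let $[0,\tau)$ be a maximal interval on which $y>0$. Set
\[
w(t):=z(t)-z(0)+(1-\alpha)\int_0^t k(s)\,\di s .
\]
Because $K(t)=\int_0^t k$ is absolutely continuous, $D^+w\le 0$ at a.e.\ $t\in(0,\tau)$. Concluding that $w$ is nonincreasing is the \textbf{main obstacle}: a continuous function with $D^+w\le0$ only almost everywhere need not be monotone (think of a Cantor-type function). I would handle this as follows.

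First, Lemma \ref{L2.1} and the intended applications make $y$ locally Lipschitz. For $t$ in a compact subset of $(0,\tau)$, $y$ is bounded below, so $z$ is locally Lipschitz there, hence absolutely continuous. Then $w$ is absolutely continuous, its derivative exists a.e.\ and equals $D^+w\le 0$, so $w$ is nonincreasing.

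If only continuity of $y$ is assumed, I would instead invoke the standard comparison lemma for Dini derivatives (the version allowing a countable exceptional set, plus $y$ absolutely continuous). In the written proof I would simply state that $y$ is assumed absolutely continuous, as it is in every application.

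Letting $t\downarrow0$ and using continuity, we obtain
\[
y(t)^{1-\alpha}\le y(0)^{1-\alpha}-(1-\alpha)\int_0^t k(s)\,\di s \quad\text{for } t<\tau .
\]
Since the left side is positive, this forces $\int_0^t k<y(0)^{1-\alpha}/(1-\alpha)$ for $t<\tau$, and so $\tau\le T^*$. For $t\in[\tau,T^*)$ we have $y(t)=0$ as long as $y$ stays zero (see the next step), so the claimed bound holds trivially there, since its right side is nonnegative for $t<T^*$. This proves the explicit bound on $[0,T^*)$, with the right-hand side read as $0$ once it vanishes.

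\textbf{Vanishing and staying zero.} If $T^*<\infty$, continuity and the bound give $y(T^*)=0$. To see that $y$ cannot leave $0$, suppose $y(t_1)>0$ for some $t_1>T^*$. Let $t_0=\sup\{s<t_1:\,y(s)=0\}$, so $y(t_0)=0$ and $y>0$ on $(t_0,t_1]$. Repeating the integration argument on $(t_0,t_1]$ from a point $t_0+\varepsilon$ and letting $\varepsilon\to0$ gives
\[
y(t_1)^{1-\alpha}\le y(t_0)^{1-\alpha}-(1-\alpha)\int_{t_0}^{t_1}k\le 0,
\]
a contradiction. Hence $y\equiv0$ on $[T^*,\infty)$.
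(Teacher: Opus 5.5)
Your proposal follows the paper's proof. The shared steps are the substitution $z=y^{1-\alpha}$, the Dini chain rule $D^+z\le-(1-\alpha)k$ where $y>0$, integration over the maximal positivity interval $[0,\tau)$ (the paper's $[0,T_0)$), the deduction $\tau\le T^*$, and persistence of the zero value. Where you depart from the paper is in rigor, and on both points you are right.

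The obstacle you flag is real. With $y$ merely continuous and the Dini inequality holding only a.e., the lemma is false. Take $y$ to be the Cantor function on $[0,1]$, extended by $y\equiv1$ on $[1,\infty)$, with $k\equiv0$ and $T^*=0$. Then $D^+y=0=-k\,y^\alpha$ a.e., yet $y\not\equiv0$. This contradicts both the conclusion and the paper's Case~A.

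So the paper's step ``we integrate the above relation from $0$ to $t$'' silently uses the local Lipschitz (hence absolute) continuity that you add. That regularity is available in every application, since $\mathcal D_v^{(k)}=M^{(k)}-m^{(k)}$ is Lipschitz on bounded intervals by Lemma~\ref{L2.1}. You should therefore drop your fallback sentence about a comparison lemma ``if only continuity is assumed''; nothing can rescue that case. Simply put absolute (or local Lipschitz) continuity of $y$ into the hypotheses.

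Your re-entry argument, with $t_0=\sup\{s<t_1:\,y(s)=0\}$ and integration on $[t_0+\varepsilon,t_1]$, also proves the claim ``$y(t)=0$ for all $t\ge T_0$''. The paper only asserts this. State the argument for an arbitrary $t_1>\tau$ rather than only $t_1>T^*$, since you also invoke it on $[\tau,T^*)$.
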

\begin{proof} Now, we consider two cases:
\[ \mbox{Either}~y(0) = 0 \quad \mbox{or} \quad  y(0) > 0. \]
\noindent $\bullet$~Case A:~Suppose $y(0)=0$. Then we have
\[  y(t)=0 \quad \mbox{for all $t\ge 0.$} \]
\noindent $\bullet$~Case B:~Suppose $y(0) > 0$. Then, since $y$ is continuous, we may assume 
\[ y(t)>0 \quad \mbox{on an interval $[0,t_0)$}. \]
Define
\[
z(t):=y(t)^{1-\alpha}, \quad \forall \ t\in[0,t_0).
\]
Because the map $r\mapsto r^{1-\alpha}$ is $C^1$ on $(0,\infty)$ and locally Lipschitz, and $y$ is continuous, the map $t \mapsto z$ is continuous on $[0,t_0)$, and with \eqref{eq:dy-sublinear} we have 
\begin{align*}
D^+ z =(1-\alpha)\,y^{-\alpha}\, D^+ y \le (1-\alpha)\,y^{-\alpha}\,\bigl(-k y^\alpha\bigr)=-(1-\alpha)\,k, \quad \mbox{a.e.}~ t\in [0,t_0].
\end{align*}
We integrate the above relation from $0$ to $t$ to obtain
\[
z(t)-z(0)\le -(1-\alpha)\int_0^t k(s)\,\di s,\quad \forall~t\in[0,t_0].
\]
This yields
\begin{equation}\label{ineq}
	y(t)\le \Bigl(y(0)^{1-\alpha}-(1-\alpha)\int_0^t k(s)\,\di s\Bigr)^{\frac1{1-\alpha}}, \quad \forall~t\in[0,t_0].
\end{equation}
We set 
\[ T_0 :=\sup\{t_0\ge0: y(t)>0, \quad \forall~t\in[0,t_0)\}. \]
Then \eqref{ineq} holds for any $t\in[0,T_0]$, which implies $T_0\le T^*$. Moreover, by the continuity of $y$ and \eqref{eq:dy-sublinear}, we have 
 \[ y(T_0)=0 \quad \mbox{and} \quad y(t)=0 \quad \mbox{for all $t\ge T_0$}. \]
Hence, we get the desired result.
\end{proof}

\subsection{Contraction of component velocity diameters} \label{sec:2.2}
In this subsection, we study the contraction of ${\mathcal D}^{(k)}$ in the following proposition. 
\begin{proposition} \label{P2.1}
For $T \in (0, \infty)$, let $(x,v)\in C([0,T] ;\mathcal B)\cap C^1((0,T) ;\mathcal H)$
	be a solution to system \eqref{ICS} with initial data
	$(x^{\rm in},v^{\rm in})\in \mathcal{B}$.
	Then, for each $k\in[d]$, the following assertions hold. 
\begin{enumerate}	
\item
The extremal velocities $M^{(k)}(t)$ and $m^{(k)}(t)$ are non-increasing and non-decreasing, respectively.  
\item
The component velocity diameter ${\mathcal D}_v^{(k)}(t)$ is non-increasing in time on $[0,T]$. 
\end{enumerate}
\end{proposition}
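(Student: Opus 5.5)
We prove (1) for $M^{(k)}$ only. The claim for $m^{(k)}$ follows by the same argument applied to $-v$; the system is invariant under $v\mapsto -v$ because $\Gamma$ is odd. Part (2) is then immediate, since ${\mathcal D}_v^{(k)}=M^{(k)}-m^{(k)}$ is a non-increasing function minus a non-decreasing one.

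The difficulty is that in the infinite system the supremum $M^{(k)}(t)$ need not be attained, so we cannot differentiate along a maximizing index. We therefore avoid Dini-derivative arguments at a maximizer and use an $\varepsilon$-barrier comparison instead. Fix $s\in[0,T)$ and $\varepsilon>0$, and let $\varepsilon$-level $L:=M^{(k)}(s)+\varepsilon$. We claim that $v_i^k(t)<L$ for all $i\in\mathbb N$ and all $t\in[s,T]$; letting $\varepsilon\to0$ then gives $M^{(k)}(t)\le M^{(k)}(s)$.

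To prove the claim, define
\[
\tau:=\sup\{t\in[s,T]:\ M^{(k)}(r)<L \text{ for all } r\in[s,t]\}.
\]
By Lemma~\ref{L2.1}(ii), $M^{(k)}$ is Lipschitz, so $\tau>s$. Suppose $\tau<T$; then $M^{(k)}(\tau)=L$. On $[s,\tau]$ every component satisfies $v_j^k\le L$, so for any $i$ we have $v_j^k-v_i^k\ge v_j^k-L$, and $\Gamma^{(k)}$ is increasing. This is where the sign structure enters: if $v_i^k$ is close to $L$, then every term $\psi_{ij}\Gamma^{(k)}(v_j^k-v_i^k)$ is bounded above by $\psi_{ij}\,(L-v_i^k)^\alpha$, because $v_j^k-v_i^k\le L-v_i^k$. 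Using $\psi\le1$ and $\sum_j m_j=1$, this gives
\[
\dot v_i^k\le \kappa\,(L-v_i^k)^\alpha \quad\text{on } (s,\tau),
\]
uniformly in $i$.

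This one-sided bound alone does not prevent $v_i^k$ from reaching $L$, since $w'=(L-w)^\alpha$ is non-Lipschitz. This is the main obstacle, and we handle it with a cleaner comparison. Replace $L$ by a slightly moving barrier that the particles cannot cross. Simplest route: show $\dot v_i^k\le \kappa (M^{(k)}(t)-v_i^k)^\alpha$, and argue directly with the upper Dini derivative of $M^{(k)}$ via near-maximizers. For any $h>0$, pick $i$ with $v_i^k(t+h)\ge M^{(k)}(t+h)-h^2$. The uniform acceleration bound $C_T$ of Lemma~\ref{L2.1}(i) then gives $v_i^k(t)\ge M^{(k)}(t)-h^2-2C_Th$. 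Hence on $[t,t+h]$ the gap satisfies $M^{(k)}-v_i^k=O(h)$, because $M^{(k)}$ is Lipschitz. Then
\[
M^{(k)}(t+h)-M^{(k)}(t)\le v_i^k(t+h)+h^2-v_i^k(t)\le h^2+\int_t^{t+h}\kappa\,O(h)^\alpha\,dr=O(h^{1+\alpha}).
\]
So $D^+M^{(k)}(t)\le0$ for every $t$. Since $M^{(k)}$ is Lipschitz, hence absolutely continuous, this yields that $M^{(k)}$ is non-increasing. In this version the $\varepsilon$-barrier becomes unnecessary and the argument reduces to the Dini estimate. The key points to verify carefully are the upper bound $\dot v_i^k\le\kappa(M^{(k)}-v_i^k)^\alpha$, which holds pointwise in $t$ because $v_j^k\le M^{(k)}(r)$ at each time $r$, and the uniformity of the $O(\cdot)$ constants in $i$, which comes from Lemma~\ref{L2.1}.
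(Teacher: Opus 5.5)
Your final argument is correct and is essentially the paper's proof. You pick a near-maximizer $i$ with $v_i^k(t+h)\ge M^{(k)}(t+h)-h^2$, use the uniform bound of Lemma~\ref{L2.1} to make the gap $M^{(k)}-v_i^k$ of order $h$ on $[t,t+h]$, and combine this with $\dot v_i^k\le\kappa\,(M^{(k)}-v_i^k)^\alpha$ to get $D^+M^{(k)}\le 0$. The only differences are minor: you integrate where the paper uses the mean value theorem, and you obtain $m^{(k)}$ by symmetry instead of repeating the argument.

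Two small cleanups. The $\varepsilon$-barrier paragraph can simply be deleted. The system is not invariant under $v\mapsto -v$ alone; the correct symmetry is $(x,v)\mapsto(-x,-v)$. More simply, your argument only uses the velocity equation with the coefficients $\psi_{ij}(t)\in[0,1]$ treated as given.
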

\begin{proof} 
Since the second assertion follows from the first assertion directly, we focus on the verification of the first assertion. Below, we deal with $M^{(k)}(t)$ and $m^{(k)}(t)$ separately. \newline

\noindent $\bullet$~Case A ($M^{(k)}$ is non-increasing):  Note that Lemma \ref{L2.1} implies that the right Dini derivative
\[
D^+M^{(k)}(t):=\limsup_{h\to0^+}\frac{M^{(k)}(t+h)-M^{(k)}(t)}{h} \quad \mbox{is well-defined for all $t\in[0,T]$}. 
\]
 Moreover, it is well-known that if $D^{+}f(t)\le0$ for all $t$, then $f$ is non-increasing. Fix $t\in[0,T]$ and take $h>0$ small. We choose an index $i_h$ such that
\begin{equation}\label{eq:near_max_at_th}
	v_{i_h}^{k}(t+h)\ge M^{(k)}(t+h)-h^2.
\end{equation}
Then, we have
\begin{align*}
	\frac{M^{(k)}(t+h)-M^{(k)}(t)}{h}
	&\le \frac{v_{i_h}^{k}(t+h)+h^2-v_{i_h}^{k}(t)}{h}=\frac{v_{i_h}^{k}(t+h)-v_{i_h}^{k}(t)}{h}+h.
\end{align*}
By the mean value theorem, there exists $\xi_h\in(t,t+h)$ such that
\[
\frac{v_{i_h}^{k}(t+h)-v_{i_h}^{k}(t)}{h}=\dot v_{i_h}^{k}(\xi_h).
\]
This leads to 
\begin{equation}\label{eq:Mn_quot}
	\frac{M^{(k)}(t+h)-M^{(k)}(t)}{h}\le \dot v_{i_h}^{k}(\xi_h)+h.
\end{equation}
Now, we estimate $\dot v_{i_h}^{k}(\xi_h)$. First, we recall \eqref{ICS} to get 
\[
\dot v_{i_h}^{k}(\xi_h)= \kappa \sum_{j=1}^{\infty} m_j \psi_{i_hj}(\xi_h)
\Gamma^{(k)}\big(v_j^{k}(\xi_h)-v_{i_h}^{k}(\xi_h)\big).
\]
Since $v_j^{k}(\xi_h)\le M^{(k)}(\xi_h)$ for all $j$, and $\Gamma^{(k)}$ is nondecreasing on $\mathbb R$, we have 
\[
\Gamma^{(k)}\big(v_j^{k}(\xi_h)-v_{i_h}^{k}(\xi_h)\big)
\le \Gamma^{(k)}\big(M^{(k)}(\xi_h)-v_{i_h}^{k}(\xi_h)\big), \quad j\in\mathbb{N}.
\]
Because $0\le \psi \le 1$ and $\sum_j m_j = 1$, we have
\begin{equation}\label{eq:dvih_upper}
	\dot v_{i_h}^{k}(\xi_h)
	\le \Gamma^{(k)}\big(M^{(k)}(\xi_h)-v_{i_h}^{k}(\xi_h)\big).
\end{equation}
Next, it follows from the Lipschitz continuity of $M^{(k)}$ and $v_{i_h}^{k}$ together with \eqref{eq:near_max_at_th}, one has
\[
\begin{aligned}
	0&\le M^{(k)}(\xi_h)-v_{i_h}^{k}(\xi_h)\\
	&\le |M^{(k)}(\xi_h)-M^{(k)}(t+h)|
	+ |M^{(k)}(t+h)-v_{i_h}^{k}(t+h)|
	+ |v_{i_h}^{k}(t+h)-v_{i_h}^{k}(\xi_h)|
	\\
	&\le C_T h + h^2 + C_T h\to 0, \quad \text{as} \quad h\to0^+.
\end{aligned}
\]
On the other hand, by the definition of $\Gamma^{(k)}$, we have
\[
\lim_{h\to 0^+}\Gamma^{(k)}\big(M^{(k)}(\xi_h)-v_{i_h}^{k}(\xi_h)\big)=0.
\]
We combine this with \eqref{eq:Mn_quot}-\eqref{eq:dvih_upper} to obtain
\[
D^+M^{(k)}(t)\le 0,\quad\forall\ t\in(0,T).
\]
This yields the desired estimate.  \newline

\noindent $\bullet$~Case B ($m^{(k)}$ is non-decreasing): Again, recall that 
\[
D^-m^{(k)}(t):=\liminf_{h\to0^+}\frac{M(t+h)-M(t)}{h}\ge 0,\ \forall\ t\in(0,T).
\]
Indeed, the argument is analogous. We fix $t$ and choose $j_h$ such that
\[
v_{j_h}^{k}(t+h)\le m^{(k)}(t+h)+h^2.
\]
Then, we have
\begin{align*}
	\frac{m^{(k)}(t+h)-m^{(k)}(t)}{h}
	&\ge \frac{v_{j_h}^{k}(t+h)-v_{j_h}^{k}(t)}{h}-h= \dot v_{j_h}^{k}(\eta_h)-h
\end{align*}
for some $\eta_h\in(t,t+h)$. Since $v_j^{k}(\eta_h)\ge m^{(k)}(\eta_h)$ for all $j$ and $\Gamma^{(k)}$ is nondecreasing,
\[
\Gamma^{(k)}\big(v_j^{k}(\eta_h)-v_{j_h}^{k}(\eta_h)\big)
\ge \Gamma^{(k)}\big(m^{(k)}(\eta_h)-v_{j_h}^{k}(\eta_h)\big),
\]
and with $0\le \psi \le 1$ and $\sum_j m_j=1$, we obtain
\[
\dot v_{j_h}^{k}(\eta_h)\ge \kappa \Gamma^{(k)}\big(m^{(k)}(\eta_h)-v_{j_h}^{k}(\eta_h)\big).
\]
Moreover, since 
\[
\begin{aligned}
	0&\le v_{j_h}^{k}(\eta_h)-m^{(k)}(\eta_h)\\
	&\le |m^{(k)}(\eta_h)-m^{(k)}(t+h)|+ \!|m^{(k)}(t+h)\!-\! v_{j_h}^{k}(t+h)|
	+ |v_{j_h}^{k}\!(t+h)\!-\! v_{j_h}^{k}\!(\eta_h)|
	\\
	&\le C_T h + h^2 + C_T h\to 0, \quad \text{as} \quad h\to0^+,
\end{aligned}
\]
we have
\[
\lim_{h\to 0^+}\Gamma^{(k)}\big(m^{(k)}(\eta_h)-v_{j_h}^{k}(\eta_h)\big)=0.
\]
Hence,  we have
\[ D^-m^{(k)}(t)\ge0,\quad \forall\ t\in(0,T).\]
Finally, since ${\mathcal D}_v^{(k)}(t)=M^{(k)}(t)-m^{(k)}(t)$, we have 
\[
D^+ {\mathcal D}_v^{(k)}(t)\le  D^+M^{(k)}(t)- D^-m^{(k)}(t)\le 0,\quad \forall\ t\in(0,T).
\]
So ${\mathcal D}_v^{(k)}(\cdot)$ is non-increasing on $[0,T]$.
\end{proof}

\vspace{0.5cm}

\section{Finite-time flocking I: fixed sender network} \label{sec:3}
\setcounter{equation}{0}
In this section, we study the emergence of finite-time flocking of system \eqref{ICS}. First, we present right Dini derivative estimates for $M^{(k)}(t)$ and $m^{(k)}(t)$ which refine the estimates in proof of Proposition \ref{P2.1}.

\begin{lemma}\label{L3.1}
For $\alpha \in (0, 1)$, let $(x, v)\in C([0,\infty);\mathcal B)\cap C^1((0,\infty);\mathcal H)$ be a global solution to system \eqref{ICS} with initial data $(x^{\rm in},v^{\rm in})\in \mathcal{B}$. Assume that there exists certain nonnegative continuous function $\underline{\psi}: [0,\infty) \to [0,\infty)$ such that
\begin{equation} \label{C-0}
	\inf_{i,j\in \mathbb{N}} \psi_{ij}(t)\ge \underline{\psi}(t),\quad  \forall~t\ge0.
\end{equation}
	Then, the following estimates hold:
	\[
		D^+M^{(k)} \le
		- \kappa \underline{\psi} \sum_{j=1}^{\infty} m_j\,\big|M^{(k)}-v_j^{k} \big|^\alpha, \quad 
		D^-m^{(k)}
		\ge \kappa \underline{\psi} \sum_{j=1}^{\infty} m_j\,\big|v_j^{k} -m^{(k)}\big|^\alpha.
	\]
\end{lemma}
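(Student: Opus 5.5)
The plan is to refine the near-maximizer argument from the proof of Proposition \ref{P2.1}, keeping the full interaction sum instead of bounding it by $\Gamma^{(k)}(M^{(k)}-v_{i_h}^k)$. We treat $M^{(k)}$; the estimate for $m^{(k)}$ is symmetric, obtained by replacing $v$ with $-v$ and using the oddness of $\Gamma^{(k)}$. Fix $t>0$ and $h>0$ small, and choose $i_h$ with $v_{i_h}^k(t+h)\ge M^{(k)}(t+h)-h^2$. Exactly as before, the mean value theorem gives some $\xi_h\in(t,t+h)$ with
\[
\frac{M^{(k)}(t+h)-M^{(k)}(t)}{h}\le \dot v_{i_h}^k(\xi_h)+h .
\]

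\emph{Step 1 (sign and lower bound on $\psi$).} For every $j$, set $\delta_h:=M^{(k)}(\xi_h)-v_{i_h}^k(\xi_h)\ge0$. Then
\[
v_j^k-v_{i_h}^k=(v_j^k-M^{(k)})+\delta_h \le \delta_h .
\]
We split the sum into two index sets. On $\{j: v_j^k(\xi_h)\le v_{i_h}^k(\xi_h)\}$ the summand $\psi_{i_hj}\Gamma^{(k)}(v_j^k-v_{i_h}^k)$ is $\le0$, so by \eqref{C-0} it is bounded above by $-\underline\psi(\xi_h)\,|v_j^k-v_{i_h}^k|^\alpha$. On the complementary set the summand lies in $[0,\delta_h^\alpha]$ because $\psi\le1$. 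Since $\alpha\in(0,1)$, the map $s\mapsto|s|^\alpha$ is $\alpha$-Hölder with constant $1$, so in either case
\[
\psi_{i_hj}\Gamma^{(k)}(v_j^k-v_{i_h}^k)\le -\underline\psi\,|M^{(k)}-v_j^k|^\alpha+2\delta_h^\alpha .
\]
On the second set this uses $|M-v_j|^\alpha\le\delta_h^\alpha$ and $\underline\psi\le1$. On the first set it uses $|v_j-v_{i_h}|^\alpha\ge|M-v_j|^\alpha-\delta_h^\alpha$.

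Summing against $m_j$ with $\sum_j m_j=1$ gives
\[
\dot v_{i_h}^k(\xi_h)\le -\kappa\,\underline\psi(\xi_h)\sum_j m_j|M^{(k)}(\xi_h)-v_j^k(\xi_h)|^\alpha+2\kappa\delta_h^\alpha .
\]

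\emph{Step 2 (passage to the limit).} As in Proposition \ref{P2.1}, $\delta_h\le 2C_Th+h^2\to0$. The functions $\underline\psi$ and $M^{(k)}$ are continuous (the latter by Lemma \ref{L2.1}), and each $v_j^k$ is Lipschitz with the constant $C_T$ uniformly in $j$. Hence
\[
\bigl||M^{(k)}(\xi_h)-v_j^k(\xi_h)|^\alpha-|M^{(k)}(t)-v_j^k(t)|^\alpha\bigr|\le(2C_Th)^\alpha
\]
uniformly in $j$, and the weighted series converges uniformly, with total mass $1$. Taking $\limsup_{h\to0^+}$ then yields
\[
D^+M^{(k)}(t)\le-\kappa\underline\psi(t)\sum_j m_j|M^{(k)}(t)-v_j^k(t)|^\alpha .
\]

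The main obstacle is Step 1: the near-maximizer $i_h$ is not an exact maximizer, so the terms with $v_j^k>v_{i_h}^k$ have the wrong sign. The key point is that these terms are controlled uniformly by $\delta_h^\alpha$ through the Hölder continuity of $|\cdot|^\alpha$. A further requirement is that the constant $C_T$ from Lemma \ref{L2.1}(i) be uniform in $i$, which is what Step 2 needs to pass to the limit in the series. Since Lemma \ref{L2.1} as stated gives a bound depending on $|v_i^k(0)|$, we use $\|v^{\rm in}\|_\infty$ in its place to make it uniform.
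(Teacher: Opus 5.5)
Your proof is correct. Its skeleton is the paper's: the near-maximizer $i_h$, the mean value theorem, an upper bound for $\dot v^k_{i_h}(\xi_h)$, then $h\to0^+$. You differ in how you handle the fact that $i_h$ is not an exact maximizer.

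The paper adds and subtracts $\kappa\sum_j m_j\psi_{i_hj}(\xi_h)\Gamma^{(k)}\bigl(v_j^k(t)-M^{(k)}(t)\bigr)$, i.e.\ it compares against the exact supremum at the frozen time $t$. Then every summand of the main term $-\kappa\sum_j m_j\psi_{i_hj}(\xi_h)|v_j^k(t)-M^{(k)}(t)|^\alpha$ is nonpositive and can be bounded by $\underline\psi(\xi_h)$ at once, with no sign splitting. All wrong-sign contributions go into the remainder $\sum_j m_j\mathcal I_j(h)$, which the paper sends to zero by pointwise convergence plus dominated convergence.

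You instead compare against $v^k_{i_h}(\xi_h)$. This forces the split into $\{j: v_j^k\le v^k_{i_h}\}$ and its complement. In return, your error terms $2\kappa\delta_h^\alpha$ and $\kappa(2C_Th)^\alpha$ are explicit and uniform in $j$, so the limit in the series follows from uniform convergence rather than from a convergence theorem.

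Your remark that the Lipschitz constant from Lemma~\ref{L2.1} must be uniform in $i$ (e.g.\ by using $\|v^{\rm in}\|_\infty$) is well taken. The paper's argument needs it just as much, because its convergence $v^k_{i_h}(\xi_h)\to M^{(k)}(t)$ runs along the varying index $i_h$.
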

\begin{proof}  Since the second estimate can be done similarly to that of the first estimate, we only focus on the first estimate. We use \eqref{eq:Mn_quot} and consider the dynamics of $ v_{i_h}^{k}(\xi_h)$ to find 
\begin{equation}\label{ineq:1}
	\begin{aligned}
		\dot v_{i_h}^{k}(\xi_h)
		&=\kappa
		\sum_{j=1}^{\infty} m_j\,\psi_{i_h}(\xi_h)\,
		\Gamma^{(k)}\big(v_j^{k}(\xi_h)-v_{i_h}^{k}(\xi_h)\big)\\
		&=-\kappa \sum_{j=1}^{\infty} m_j\,\psi_{i_hj}(\xi_h)|v_j^{k}(t)-M^{(k)}(t)|^\alpha \\
		&+ \kappa \sum_{j=1}^{\infty} m_j\,\psi_{i_hj}(\xi_h)\,
		\Gamma^{(k)}\big(v_j^{k}(\xi_h)-v_{i_h}^{k}(\xi_h)\big)\\
		&\hspace{0.2cm}-\kappa \sum_{j=1}^{\infty} m_j\,\psi_{i_hj}(\xi_h)\Gamma^{(k)}\big(v_j^{k}(t)-M^{(k)}(t)\big)\\
		&\le
		- \kappa  \underline{\psi} (\xi_h)\sum_{j=1}^{\infty} m_j|v_j^{k}(t)- M^{(k)}(t)|^\alpha+ \kappa \sum_{j=1}^{\infty} m_j\mathcal{I}_j(h),
\end{aligned}\end{equation}
where 
\begin{align*}
	\mathcal{I}_j(h):= \psi_{i_hj}(\xi_h)\,
	\left|\Gamma^{(k)}\big(v_j^{k}(\xi_h)-v_{i_h}^{k}(\xi_h)\big)-\Gamma^{(k)}\big(v_j^{k}(t)-M^{(k)}(t)\big)\right|.
\end{align*}
On the one hand, since the velocities are uniformly Lipschitz continuous and $\Gamma^{(k)}$ is continuous, one has the pointwise convergence:
\[
\lim_{h\to 0^+}\mathcal{I}_j(h)=0.
\]
By the definition of $\Gamma^{(k)}$ and the uniform boundedness of $v_i^{k}(t)$ in Remark \ref{R2.1}, we have 
\[
|\mathcal{I}_j(h)|\le2 |{\mathcal D}_v^{(k)}(0)|^\alpha.
\]
Therefore, by the dominated convergence theorem together with $\sum_{j=1}^{\infty}m_j=1$, we can pass to the limit under the summation and obtain
\[
\lim_{h\to0^+}\sum_{j=1}^\infty m_j \mathcal I_j(h)=0.
\]
Since $\underline{\psi}$ is continuous, we come back to \eqref{ineq:1} to obtain
\[
\overline{\lim_{h\to0^+}}\dot v_{i_h}^{k}(\xi_h)\le - \kappa \underline{\psi} (t)\sum_{j=1}^{\infty} m_j|v_j^{k}(t)- M^{(k)}(t)|^\alpha.\]
Hence, with \eqref{eq:Mn_quot} we have  
\[
D^+M^{(k)}(t)
\le
- \kappa \underline{\psi}(t)\sum_{j=1}^{\infty} m_j|v_j^{k}(t)- M^{(k)}(t)|^\alpha.
\]
Similarly, we have 
\[
D^-m^{(k)}(t)
\ge \kappa \underline{\psi}(t)\sum_{j=1}^{\infty} m_j|v_j^{k}(t)- m^{(k)}(t)|^\alpha.
\]
\end{proof}
In the next proposition, we derive an estimate for the component velocity diameter functional. 
\begin{proposition}\label{P3.1}
Let $(x, v)\in C([0,\infty);\mathcal B)\cap C^1((0,\infty);\mathcal H)$
	be a global solution to system \eqref{ICS} with initial data
	$(x^{\rm in},v^{\rm in})\in \mathcal{B}$. For $\alpha \in (0, 1)$ and $k \in [d]$, if there exists a time $\underline{t}^{(k)}<\infty$ such that
	\begin{equation}\label{int_1}
		\int_0^{\underline{t}^{(k)}} \psi(a+bs)\,\di s = \frac{ {\mathcal D}_v^{(k)}(0)^{1-\alpha}}{1-\alpha},
	\end{equation}
	where $a,b$ are defined in Remark \ref{R2.1},
	then for each $k\in[d]$, for $0\le t\le {\underline t}^{(k)}$, we have
	\begin{equation*}
		{\mathcal D}_v^{(k)}\!(t)  \leq
		\begin{cases}
		 \Bigl((\mathcal D_v^{(k)}(0))^{1-\alpha}-(1-\alpha)\int_0^t \psi(a+bs)\,\di s\Bigr)^{\frac1{1-\alpha}}, \quad & 0 < t < \underline{t}^{(k)}, \\
	          0, \quad &t\ge \underline{t}^{(k)}.
	        \end{cases} 
	\end{equation*}
\end{proposition}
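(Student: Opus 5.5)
We combine Lemma \ref{L3.1} with the a priori position bound of Remark \ref{R2.1} and then close the argument with the sublinear comparison Lemma \ref{L2.3}.

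\textbf{Step 1: a lower bound for the communication weight.} By \eqref{rough}, we have $\sup_{i,j}\|x_i(t)-x_j(t)\|\le a+bt$ for all $t\ge0$. Since $\psi$ is non-increasing by $(\mathcal A_1)$, this gives
\[
\inf_{i,j}\psi_{ij}(t)\ge \psi(a+bt)=:\underline{\psi}(t).
\]
This function is continuous and nonnegative, so \eqref{C-0} holds. Lemma \ref{L3.1} then yields
\[
D^+M^{(k)}\le -\kappa\,\underline{\psi}\sum_j m_j|M^{(k)}-v_j^k|^\alpha,
\qquad
D^-m^{(k)}\ge \kappa\,\underline{\psi}\sum_j m_j|v_j^k-m^{(k)}|^\alpha .
\]

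\textbf{Step 2: the key inequality.} Subtracting the two estimates gives
\[
D^+\mathcal D_v^{(k)}\le D^+M^{(k)}-D^-m^{(k)}\le -\kappa\,\underline\psi\sum_j m_j\bigl(|M^{(k)}-v_j^k|^\alpha+|v_j^k-m^{(k)}|^\alpha\bigr).
\]
For $a,b\ge0$ and $\alpha\in(0,1)$, subadditivity of $r\mapsto r^\alpha$ gives $a^\alpha+b^\alpha\ge (a+b)^\alpha$. Here the two terms sum to $M^{(k)}-m^{(k)}=\mathcal D_v^{(k)}$. Using $\sum_j m_j=1$, we obtain
\[
D^+\mathcal D_v^{(k)}(t)\le -\kappa\,\psi(a+bt)\,\bigl(\mathcal D_v^{(k)}(t)\bigr)^\alpha .
\]

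\textbf{Step 3: apply the comparison lemma.} Take $y=\mathcal D_v^{(k)}$, which is continuous (indeed Lipschitz, by Lemma \ref{L2.1}), and $k(t)=\kappa\psi(a+bt)$. Lemma \ref{L2.3} with $T^*=\underline t^{(k)}$ gives the explicit bound for $t<\underline t^{(k)}$ and vanishing for $t\ge\underline t^{(k)}$. Non-negativity of $\mathcal D_v^{(k)}$ together with Proposition \ref{P2.1} (monotonicity) confirms that the diameter stays at zero after it first reaches zero.

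\textbf{Main obstacle: the factor $\kappa$.} The statement's integral condition \eqref{int_1} and the displayed bound contain $\psi(a+bs)$ without $\kappa$. Either one normalizes $\kappa=1$, or one reads the statement as holding with $\kappa\psi$ in place of $\psi$. I would handle this by rescaling time $t\mapsto\kappa t$, or by noting that the argument goes through verbatim with $k(t)=\kappa\psi(a+bt)$. In that reading, the displayed formula holds with $\kappa\int_0^t\psi(a+bs)\,\di s$ and the extinction time is defined accordingly.

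A secondary technical point concerns the definitions of $D^+$ and $D^-$. Proposition \ref{P2.1} defines them as right upper and lower Dini derivatives. The inequality $D^+(M-m)\le D^+M-D^-m$, where $D^-m$ is the right lower derivative, then holds by elementary $\limsup/\liminf$ rules. This is exactly what is needed for Lemma \ref{L2.3}.
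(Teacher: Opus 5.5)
Your proposal is correct and is essentially the paper's own proof: the paper likewise gets $D^+\mathcal D_v^{(k)}\le-\kappa\,\underline\psi\,(\mathcal D_v^{(k)})^\alpha$ from Lemma~\ref{L3.1} plus subadditivity of $r\mapsto r^\alpha$, takes $\underline\psi(t)=\psi(a+bt)$ from Remark~\ref{R2.1}, and concludes with Lemma~\ref{L2.3}; your reading of the $\kappa$ discrepancy is also what its proof does, since it silently replaces the right-hand side of \eqref{int_1} by $\mathcal D_v^{(k)}(0)^{1-\alpha}/(\kappa(1-\alpha))$. One caveat you share with the paper: Proposition~\ref{P2.1} only controls the componentwise diameters, so \eqref{rough} is actually justified with $b=\bigl(\sum_{k=1}^d\mathcal D_v^{(k)}(0)^2\bigr)^{1/2}$ rather than with the Euclidean initial velocity diameter, and with that choice of $b$ your argument goes through verbatim.
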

\begin{proof} 
We split its proof into two steps. \newline

\noindent $\bullet$~Step A: Suppose that there exists $\underline{\psi}$ satisfying \eqref{C-0}. Then, we claim that 
\begin{equation}\label{eq:Dini-D}
D^+ {\mathcal D}_v^{(k)}(t)\le - \kappa \underline{\psi}(t) {\mathcal D}_v^{(k)}(t)^\alpha, \quad \mbox{a.e.,}~~t > 0.
\end{equation}
{\it Proof of \eqref{eq:Dini-D}}:  It follows from $\mathcal{D}^{(k)}(t)=M^{(k)}(t)-m^{(k)}(t)$ and  the subadditivity of the upper Dini derivative to find 
\[
D^+ {\mathcal D}_v^{(k)}(t)\le D^+M^{(k)}(t)- D^-m^{(k)}(t).
\]
Now, we use Lemma~\ref{L3.1} to obtain the desired estimate \eqref{eq:Dini-D}:
\begin{align*}
D^+ {\mathcal D}_v^{(k)}(t)&\le
	 - \kappa \underline{\psi}(t)\sum_{j=1}^{\infty} m_j
	\Big[|M^{(k)}(t)-v^{k}_j(t)|^\alpha+|v^{k}_j(t)-m^{(k)}(t)|^\alpha\Big]\\
	&\le - \kappa \underline{\psi}(t)\sum_{j=1}^{\infty} m_j\big(M^{(k)}(t)-m^{(k)}(t)\big)^\alpha =- \kappa \underline{\psi}(t)({\mathcal D}_v^{(k)}(t))^{\alpha},
\end{align*}
where we use the following relations:
\[ a^\alpha+b^\alpha\ge(a+b)^\alpha,\quad \forall\ a,\ b\ge0, \quad M^{(k)}(t)-v^{k}_j(t)\ge0,\quad v^{k}_j(t)-m^{(k)}(t)\ge0.\]
\vspace{0.2cm}

\noindent $\bullet$~Step B:  It follows from Remark \ref{R2.1} that 
\[
\inf_{i,j} \psi_{ij}(t)\ge \psi(a+bt)=: \underline{\psi}(t), \quad t \geq 0,
\]
which satisfies \eqref{C-0}.  For each fixed $k\in[d]$, by \eqref{eq:Dini-D}, the map $t \mapsto {\mathcal D}_v^{(k)}(t)\ge0$ is continuous and  it satisfies
\begin{equation*} \label{C-0-1}
D^+ {\mathcal D}_v^{(k)}(t)\le - \kappa \underline{\psi}(t)\,( {\mathcal D}_v^{(k)}(t))^\alpha \quad \text{for a.e. }t\ge0.
\end{equation*}
With \eqref{int_1}, there exists $\underline{t}^{(k)}<\infty$ such that 
\begin{equation*}\label{H}
	\int_0^{\underline{t}^{(k)}} \underline{\psi}(s)\ \di s = \frac{({\mathcal D}_v^{(k)}(0))^{1-\alpha}}{\kappa(1-\alpha)}.
\end{equation*}
Now, we use Lemma~\ref{L2.3} to find 
\[
\begin{cases}
\displaystyle {\mathcal D}_v^{(k)}\!(t)\!\le \Bigl(({\mathcal D}_v^{(k)}(0))^{1-\alpha}- \kappa (1-\alpha)\int_0^t \underline{\psi}(s)\,\di s\Bigr)^{\frac1{1-\alpha}}, \quad &0 \leq  t<  \underline{t}^{(k)}, \\
\displaystyle  {\mathcal D}_v^{(k)}(t)=0, \quad & t\ge \underline{t}^{(k)}.
\end{cases}
\]
\end{proof}
We set 
\[
\Psi(t):=\int_{0}^{t} \psi(r)\ \di r, \quad {\mathcal D}_x^{\infty} :=\Psi^{-1}\!\left(
				\Psi( {\mathcal D}_x(0))
				\!+\!\frac{1}{\kappa(2-\alpha)}\sum_{k=1}^d ( {\mathcal D}_v^{(k)}(0))^{2-\alpha}\!\!\right),
\]			
where $\Psi^{-1}$ denotes the (generalized) inverse of the strictly increasing function $\Psi$. Now, we are ready to state the first main result.
\begin{theorem}\label{T3.1}
Let $(x, v)\in C([0,\infty);\mathcal B)\cap C^1((0,\infty);\mathcal H)$
	be a global solution to system \eqref{ICS} with initial data
	$(x^{\rm in},v^{\rm in})\in \mathcal{B}$ satisfying \eqref{int_1}. Then, there exist a positive constant $t_f \in (0, \infty)$ such that 
\begin{equation} \label{C-1}
\sup_{t \geq t_f} \sup_{i \in {\mathbb N}} \|v_i(t)-v_c^{\rm in}\|=0, \quad  \sup_{t \geq 0}  \sup_{i,j \in {\mathbb N}}\|x_i(t)-x_j(t)\| \leq {\mathcal D}_x^{\infty}.
\end{equation}
In particular, the flocking time $t_f$ satisfies 
\[
  t_f \le \max_{ k\in [d]}\frac{ {\mathcal D}_v^{(k)}(0)^{1-\alpha}}{\kappa(1-\alpha) \psi( {\mathcal D}_x^{\infty})}.
\]			
%, \quad \sup_{i,j \in {\mathbb N}}\|x_i(t_f)-x_j(t_f)\| = {\mathcal D}_x^{\infty}
\end{theorem}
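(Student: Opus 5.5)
The plan is a bootstrapping argument that couples the position diameter ${\mathcal D}_x(t):=\sup_{i,j}\|x_i(t)-x_j(t)\|$ with the component velocity diameters, in the spirit of a Lyapunov functional
\[
\mathcal L(t):=\Psi({\mathcal D}_x(t))+\frac{1}{\kappa(2-\alpha)}\sum_{k=1}^d ({\mathcal D}_v^{(k)}(t))^{2-\alpha}.
\]

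\textbf{Step 1: the velocity estimate.} By Step A of Proposition \ref{P3.1}, with $\underline\psi(t):=\psi({\mathcal D}_x(t))$ (which satisfies \eqref{C-0} because $\psi$ is non-increasing), we have for every $k$
\[
D^+{\mathcal D}_v^{(k)}\le -\kappa\,\psi({\mathcal D}_x)({\mathcal D}_v^{(k)})^\alpha .
\]
Continuity of $\underline\psi$ follows from the Lipschitz continuity of ${\mathcal D}_x$, since $\|x_i(t)-x_i(s)\|\le \|v^{\rm in}\|_\infty|t-s|$ uniformly in $i$.

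\textbf{Step 2: the position estimate.} Since $\dot x_i=v_i$, we get $D^+{\mathcal D}_x\le {\mathcal D}_v:=\sup_{i,j}\|v_i-v_j\|$. Multiplying by $\psi({\mathcal D}_x)$ and using the chain rule for $\Psi\in C^1$ gives $D^+\Psi({\mathcal D}_x)\le \psi({\mathcal D}_x){\mathcal D}_v$. Next bound ${\mathcal D}_v\le \sum_k {\mathcal D}_v^{(k)}$ (Euclidean norm $\le$ $\ell^1$ norm). Then
\[
\frac{1}{\kappa(2-\alpha)}D^+({\mathcal D}_v^{(k)})^{2-\alpha}\le -\psi({\mathcal D}_x)\,{\mathcal D}_v^{(k)},
\]
because $({\mathcal D}_v^{(k)})^{1-\alpha}({\mathcal D}_v^{(k)})^{\alpha}={\mathcal D}_v^{(k)}$. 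Summing these gives $D^+\mathcal L\le 0$, so $\mathcal L$ is non-increasing. Hence $\Psi({\mathcal D}_x(t))\le\mathcal L(0)$, and applying $\Psi^{-1}$ yields ${\mathcal D}_x(t)\le{\mathcal D}_x^\infty$ for all $t$, which is the second part of \eqref{C-1}. If $\Psi$ is bounded and $\mathcal L(0)$ exceeds $\sup\Psi$, the generalized inverse is $+\infty$. I expect the hypothesis \eqref{int_1} is what rules this out, or the statement is simply vacuous in that case. This step is the main technical obstacle: one must justify the Dini-derivative chain rule for the composite $(\cdot)^{2-\alpha}$ of a merely Lipschitz function, and handle the points where ${\mathcal D}_v^{(k)}=0$. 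There the bound is trivial, because by Proposition \ref{P2.1} the diameter stays zero once it reaches zero.

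\textbf{Step 3: finite-time alignment.} With ${\mathcal D}_x\le{\mathcal D}_x^\infty$ we have $\underline\psi\ge\psi({\mathcal D}_x^\infty)>0$ (the constant is positive whenever ${\mathcal D}_x^\infty<\infty$). Lemma \ref{L2.3} with the constant rate $k(t)=\kappa\psi({\mathcal D}_x^\infty)$ then gives ${\mathcal D}_v^{(k)}(t)=0$ for $t\ge ({\mathcal D}_v^{(k)}(0))^{1-\alpha}/(\kappa(1-\alpha)\psi({\mathcal D}_x^\infty))$. Taking the maximum over $k\in[d]$ gives the bound on $t_f$, at which time all the $v_i$ coincide.

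\textbf{Step 4: identifying the common velocity.} Once all velocities coincide, the common value must equal $\sum_i m_iv_i=v_c(t)$, and this equals $v_c^{\rm in}$ by Lemma \ref{lem:mean_velocity}. This yields the first relation in \eqref{C-1}.
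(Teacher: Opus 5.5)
Your Steps 1, 2 and 4 follow the paper's argument: the same functional $\mathcal L$, and the same identification of the limit via Lemma~\ref{lem:mean_velocity}. Your Step 3 actually supplies the derivation of the stated bound on $t_f$, which the paper asserts without writing out.

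The gap is the \emph{existence} of a finite $t_f$. You obtain it only from ${\mathcal D}_x(t)\le{\mathcal D}_x^\infty$. That yields a positive dissipation rate only if ${\mathcal D}_x^\infty<\infty$, i.e.\ only if $\mathcal L(0)<\int_0^\infty\psi(r)\,\di r$, and you never use \eqref{int_1}. That hypothesis does not exclude ${\mathcal D}_x^\infty=\infty$. Nor is the theorem vacuous then, because the first relation in \eqref{C-1} still asserts finite-time alignment.

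To see this, note $\int_0^t\psi(a+bs)\,\di s=\frac1b\int_a^{a+bt}\psi(r)\,\di r$ with $a={\mathcal D}_x(0)$. So \eqref{int_1}, in the form with $\kappa$ used in the proof of Proposition~\ref{P3.1}, only needs $\int_a^\infty\psi(r)\,\di r> b\,({\mathcal D}_v^{(k)}(0))^{1-\alpha}/(\kappa(1-\alpha))$. By contrast, ${\mathcal D}_x^\infty<\infty$ needs $\int_a^\infty\psi(r)\,\di r>\sum_{k}({\mathcal D}_v^{(k)}(0))^{2-\alpha}/(\kappa(2-\alpha))$.

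Take $d=3$ and initial velocities $\pm\frac b2 e_k$, $k=1,2,3$. The Euclidean velocity diameter is $b$ and ${\mathcal D}_v^{(k)}(0)=b$ for every $k$, so the two thresholds are $b^{2-\alpha}/(\kappa(1-\alpha))$ and $3b^{2-\alpha}/(\kappa(2-\alpha))$. For $\alpha<\frac12$ the second is larger. Any integrable $\psi$ (e.g.\ $\beta>\frac12$ in Remark~\ref{R3.2}) with $\int_a^\infty\psi$ in between satisfies \eqref{int_1} while ${\mathcal D}_x^\infty=\infty$. In that case your Step 3 gives only the rate $\kappa\lim_{r\to\infty}\psi(r)=0$.

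The missing step is the paper's route to the first part of \eqref{C-1}. By Remark~\ref{R2.1}, $\inf_{i,j}\psi_{ij}(t)\ge\psi(a+bt)$. Step A of Proposition~\ref{P3.1} and Lemma~\ref{L2.3} with rate $\kappa\psi(a+bt)$, together with \eqref{int_1}, then give ${\mathcal D}_v^{(k)}(t)=0$ for $t\ge\underline t^{(k)}$. Hence $t_f\le\max_k\underline t^{(k)}<\infty$ whether or not ${\mathcal D}_x^\infty$ is finite.

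Keep your Step 3 as the sharper time estimate. It is informative exactly when ${\mathcal D}_x^\infty<\infty$, for instance always when $\int_0^\infty\psi=\infty$, where your argument gives flocking even without \eqref{int_1}.
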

\begin{proof} 
\noindent $\bullet$~(Derivation of the first estimate in \eqref{C-1}):~We set 
\[ t_f:=\max_{k\in[d]}\inf\{t\ge0: {\mathcal D}_v^{(k)}(t)=0\}. \]
By Proposition \ref{P3.1}, we have
\begin{equation} \label{C-2}
v_i(t)=v_j(t),\quad \forall~ i,j \in {\mathbb N}, \quad t\ge t_f.
\end{equation}
Now, we use \eqref{C-2} and conservation of the weighted average velocity in Lemma \ref{lem:mean_velocity} to find 
\[
v_c^{\rm in}= \sum_{j \in {\mathbb N}} m_j v_j(t)=v_i(t),\quad \forall~ i, \ \forall ~t\ge t_f.
\]
Therefore, we have derived the first estimate in \eqref{C-1}. \newline

\noindent $\bullet$~(Derivation of the second estimate in \eqref{C-1}):~We set 
\[
{\mathcal D}_x(t):=\sup_{i,j}\|x_i(t)-x_j(t)\|.
\]
Since $\dot x_i=v_i$, $v_i(\cdot)$ are uniformly bounded, and the map $t\to\|x_i(t)-x_j(t)\|$ is Lipschitz continuous with a uniform Lipschitz coefficients, ${\mathcal D}_x(t)$ is Lipschitz continuous as well. In particular, the upper Dini derivatives exist for a.e. $t\ge0$, and
\begin{equation*}\label{eq:Dx-by-Dk}
D^+ {\mathcal D}_x(t)
	\le \sup_{i,j}\|v_i(t)-v_j(t)\|
	\le \sum_{k=1}^d {\mathcal D}_v^{(k)}(t).
\end{equation*}
Consequently, by the chain rule for Dini derivatives,
\begin{equation}\label{eq:PhiDx-by-Dk}
		D^+\Psi( {\mathcal D}_x(t)) = \psi( {\mathcal D}_x(t))\, D^+ {\mathcal D}_x(t)\le \psi( {\mathcal D}_x(t))\sum_{k=1}^d  {\mathcal D}_v^{(k)}(t).
\end{equation}
On the other hand, for each $k\in[d]$, we choose 
\[ \underline{\psi}(t) =\psi( {\mathcal D}_x(t)). \]
By the dissipation estimate for ${\mathcal D}_v^{(k)}$, we have
\[
D^+ {\mathcal D}_v^{(k)}(t) \le -\kappa \psi({\mathcal D}_x(t))\,\big( {\mathcal D}_v^{(k)}(t)\big)^\alpha.
\]
This implies
\begin{align*}
D^+( {\mathcal D}_v^{(k)})^{2-\alpha} = \kappa (2-\alpha)({\mathcal D}_v^{(k)})^{1-\alpha}\, D^+ {\mathcal D}_v^{(k)} \le -\kappa (2-\alpha)\psi( {\mathcal D}_x)\, {\mathcal D}_v^{(k)}.
\end{align*}
We sum up over $k \in [d]$ to obtain
\begin{equation}\label{eq:sum-Dk}
	\sum_{k=1}^d D^+( {\mathcal D}_v^{(k)})^{2-\alpha}
	\le - \kappa (2-\alpha) \psi( {\mathcal D}_x)\sum_{k=1}^d {\mathcal D}_v^{(k)}.
\end{equation}
Next, we define the Lyapunov-type functional:
\[
\mathcal L(t)
:=\Psi({\mathcal D}_x(t))
+\frac{1}{\kappa(2-\alpha)}
\sum_{k=1}^d ( {\mathcal D}_v^{(k)}(t))^{2-\alpha}.
\]
We combine \eqref{eq:PhiDx-by-Dk} and \eqref{eq:sum-Dk} to obtain
\begin{align*}
\begin{aligned}
D^+\mathcal L(t)
	&\le D^+\Psi( {\mathcal D}_x(t))\!
	+\!\frac{1}{\kappa(2-\alpha)}\sum_{k=1}^d D^+\!({\mathcal D}_v^{(k)}(t))^{2-\alpha}\\
	&\le \psi({\mathcal D}_x(t))\sum_{k=1}^d {\mathcal D}_v^{(k)}(t)\!
	-\!\psi({\mathcal D}_x(t))\sum_{k=1}^d {\mathcal D}_v^{(k)}(t)=0.
\end{aligned}
\end{align*}
Hence, $\mathcal L(t)$ is non-increasing on $[0,\infty)$.
This gives
\[
\Psi( {\mathcal D}_x(t))
\le
\Psi( {\mathcal D}_x(0))
+\frac{1}{\kappa(2-\alpha)}\sum_{k=1}^d ( {\mathcal D}_v^{(k)}(0))^{2-\alpha}, \quad t \in [0, t_f].
\]
Since $\Phi$ is strictly increasing, then we have 
\[
{\mathcal D}_x(t)
\le
\Psi^{-1}\left(\Psi({\mathcal D}_x(0))
+\frac{1}{\kappa (2-\alpha)}\sum_{k=1}^d ({\mathcal D}_v^{(k)}(0))^{2-\alpha}\right) =: {\mathcal D}_x^{\infty}, \quad t \in [0, t_f].
\]

\end{proof}
\begin{remark} \label{R3.2}
\begin{enumerate} 
\item		
For the condition \eqref{int}, if the communication function is non-integrable:
		\begin{equation}\label{int}
			\int_0^{\infty} \psi(s)\,\di s=+\infty,
		\end{equation} 
		we have unconditional finite-time flocking: for any initial data, then for each $k\in[d]$ there exists a finite time $\underline{t}^{(k)}>0$ such that
		\[
		D_v^{(k)}(t)=0,\qquad \forall t\ge \underline{t}^{(k)}.
		\]
		Here, one may choose
		\[
		\underline{t}^{(k)}\!\!:=\inf\left\{t>0:\!\!\! \int_0^{t} \psi(a+bs)\,\di s
		\ge \frac{{\mathcal D}_v^{(k)}(0)^{1-\alpha}}{\kappa(1-\alpha)}\right\}.
		\]
		In particular, for Cucker-Smale communication weight function 
		\[
		\psi(r)=\frac{1}{(1+r^2)^{\beta}},~ \forall \ r\ge0.
		\] 
		If decay exponent $\beta\in[0,\frac{1}{2}]$, we have unconditional finite-time flocking of system \eqref{ICS}, while we have conditional finite-time  flocking with $\beta>\frac{1}{2}.$\newline
       \item The componentwise diameter approach also yields finite-time flocking for finite-$N$ particle systems.
		In particular, the obtained finite-time flocking estimates in this paper do not depend on particle number $N$. 
\end{enumerate}
\end{remark}

\vspace{0.5cm}

\section{Finite-time flocking II: switching sender networks}\label{sec:4}
In this section, we study the emergent behavior for \eqref{ICS} under switching sender networks. 

\subsection{Switching sender networks}
Let $\{\mathcal{G}_p\}_{p\in\mathcal{P}}$ be a family of sender networks with unit total mass.
A switching signal $\sigma:[0,\infty)\to\mathcal{P}$ is piecewise constant $\{\sigma_n \}$ with switching instants $\{t_i \}_{i=0}^{\infty}$:
\[ 0=t_0<t_1<t_2<\cdots \quad \mbox{and} \quad \sigma(t)=\sigma_n \quad \mbox{on}~~[t_n,t_{n+1}). \]
For each $p\in\mathcal{P}$, let $\{m_j^{(p)} \}$ be the sender weight of the $j$-th particle corresponding to sender network graph $\mathcal{G}_p$ with the unit mass $\displaystyle \sum_{j=1}^{\infty}m_j^{(p)}=1$.
The system \eqref{ICS} under switching sender networks reads
\begin{equation}\label{eq:ICS-switch-sender}
	\left\{
	\begin{aligned}
		\dot x_i(t)\!&= v_i(t),\qquad i\in\mathbb{N}, \quad t>0,\\
		\dot v_i(t) \!&= \kappa \sum_{j=1}^\infty m_j^{(\sigma(t))}\,
		\psi\big(\|x_j(t)-x_i(t)\|\big)\,
		\Gamma\big(v_j(t)-v_i(t)\big).
	\end{aligned}
	\right.
\end{equation}
On each time interval $[t_n,t_{n+1})$, the switching signal $\sigma(t)$ is constant $\sigma_n$.
Hence, system \eqref{eq:ICS-switch-sender} reduces to the ICS dynamics
associated with a fixed sender network $\mathcal G_{\sigma_n}$. We define
\[
\mathcal{H}_p=\ell_{m^{(p)}}^2\times\ell_{m^{(p)}}^2.
\]
By the global existence of a classical solution result in Theorem \ref{T2.1}, for any initial data
\[
(x^{\rm in}, v^{\rm in})\in \mathcal{B},
\]
there exists a classical solution in $C([t_n,t_{n+1});\mathcal B)\cap C^1((t_n,t_{n+1});\mathcal H_p)$ on each interval $[t_n,t_{n+1})$. 
\subsection{Finite-time flocking} On each switching interval $[t_n,t_{n+1})$, the signal $\sigma(t)$ is constant,
$\sigma(t)\equiv\sigma_n$, and \eqref{eq:ICS-switch-sender} reduces to the ICS dynamics associated with the fixed sender network $\mathcal G_{\sigma_n}$ with the unit total mass. We show that the finite-time flocking by the previous decay estimate of component-wise velocity diameter $\mathcal D_v^{(k)}(t)$.

\begin{theorem}\label{T4.1}
For $\alpha \in (0, 1)$, let $(x, v)$ be a global solution to \eqref{eq:ICS-switch-sender} with
	$(x^{\rm in}, v^{\rm in})\in \mathcal{B}$. Then, the following assertions hold. \newline
\begin{enumerate}
\item	
If there exists a time $\underline{t}^{(k)}<\infty$ such that
	\begin{equation}\label{eq:int_switch}
		\int_0^{\underline{t}^{(k)}} \psi(a+bs)\,\di s
		= \frac{{\mathcal D}_v^{(k)}(0)^{1-\alpha}}{\kappa(1-\alpha)},
	\end{equation}
	then
	\[
	{\mathcal D}_v^{(k)}(t)=0,\qquad \forall~ t\ge \underline{t}^{(k)}.
	\]
\item	
If we set $t_f:=\max_{k\in[d]} \underline{t}^{(k)},$ then finite-time flocking occurs at $t_f$:
\[ v_i(t)=v_j(t), \quad  x_i(t)-x_j(t)=x_i(t_f)-x_j(t_f),\quad \forall~i,j \in {\mathbb N},~~\forall~ t\ge t_f. \]
\end{enumerate}	
\end{theorem}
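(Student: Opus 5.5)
The plan is to reduce the switching case to the fixed-network estimates, applied interval by interval. The key observation is that every estimate in Section~\ref{sec:2} and Lemma~\ref{L3.1} depends on the sender weights only through two facts: $m_j^{(p)}\ge0$ and $\sum_j m_j^{(p)}=1$. In particular none of them uses a specific profile $\{m_j\}$, and the lower bound for $\psi$ in Remark~\ref{R2.1} does not either.

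\textbf{Step 1 (a priori bounds, independent of switching).} On each $[t_n,t_{n+1})$ the system is the ICS model with weights $m^{(\sigma_n)}$. The argument of Proposition~\ref{P2.1} therefore shows that $M^{(k)}$ is non-increasing and $m^{(k)}$ is non-decreasing on each interval. The solution is continuous in $\mathcal B$ across the switching instants $t_n$, so these monotonicity properties glue to all of $[0,\infty)$.

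Consequently $\|v_i(t)-v_j(t)\|\le\sup_{i,j}\|v_i^{\rm in}-v_j^{\rm in}\|=b$, and integrating $\dot x_i=v_i$ gives $\sup_{i,j}\|x_i(t)-x_j(t)\|\le a+bt$ for all $t\ge0$. Since $\psi$ is non-increasing, this yields the network-independent lower bound
\[
\inf_{i,j}\psi_{ij}(t)\ge\underline{\psi}(t):=\psi(a+bt)\quad\text{for all } t\ge0 .
\]

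\textbf{Step 2 (Dini inequality on each interval).} For $t$ in the interior of some $[t_n,t_{n+1})$, I rerun Lemma~\ref{L3.1} with $m^{(\sigma_n)}$ in place of $m$ and then the superadditivity step of Proposition~\ref{P3.1}. This gives
\[
D^+\mathcal D_v^{(k)}(t)\le-\kappa\,\psi(a+bt)\,\big(\mathcal D_v^{(k)}(t)\big)^\alpha .
\]
The only care needed is the dominated convergence step in Lemma~\ref{L3.1}, and the summable majorant $2m_j^{(\sigma_n)}\mathcal D_v^{(k)}(0)^\alpha$ still works there. The right-hand side has the same form on every interval.

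The switching instants $\{t_n\}$ form a countable set. Since $\mathcal D_v^{(k)}$ is continuous (Lemma~\ref{L2.1} applies on each interval, and the pieces glue by continuity), the inequality holds for a.e. $t>0$. This is exactly the hypothesis of Lemma~\ref{L2.3}, with $y=\mathcal D_v^{(k)}$ and $k(t)=\kappa\psi(a+bt)$.

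\textbf{Step 3 (conclusions).} Under \eqref{eq:int_switch}, Lemma~\ref{L2.3} gives $\mathcal D_v^{(k)}(t)=0$ for $t\ge\underline t^{(k)}$, which proves (1).

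For (2), take $t\ge t_f$. Then all components of all velocity differences vanish, so $v_i(t)=v_j(t)$. It follows that $\frac{d}{dt}(x_i-x_j)=0$ on $[t_f,\infty)$, and hence $x_i(t)-x_j(t)=x_i(t_f)-x_j(t_f)$. Note that this argument does not need conservation of $v_c$. That is fortunate, because $v_c$ is not conserved under switching: each interval conserves a different weighted mean.

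\textbf{Main obstacle.} The main obstacle I expect is the rigorous use of Lemma~\ref{L2.3} across switching times. That lemma integrates $D^+z$, which requires $\mathcal D_v^{(k)}$ to be absolutely continuous on compact intervals, not merely continuous. I would handle this via the Lipschitz bound of Lemma~\ref{L2.1}(ii), whose constant $C_T$ may be taken uniform over $\sigma$ thanks to the bound $\|v(t)\|_\infty\le\|v^{\rm in}\|$. If there are infinitely many switches in finite time (Zeno behavior), I would assume switching times are locally finite, as is implicit in the setup, or note that the Lipschitz bound still makes the exceptional set of switching times measure zero.
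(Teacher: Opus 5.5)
Your route is the same as the paper's. On each $[t_n,t_{n+1})$ you rerun Lemma~\ref{L3.1} and the subadditivity step of Proposition~\ref{P3.1} with the network-independent bound $\underline{\psi}(t)=\psi(a+bt)$, apply Lemma~\ref{L2.3} with $k(t)=\kappa\psi(a+bt)$, and freeze relative positions via $\dot x_i=v_i$. Your extra care (gluing across the $t_n$, the majorant in the dominated-convergence step, absolute continuity for Lemma~\ref{L2.3}, and not using conservation of $v_c$) is correct and fills in what the paper leaves implicit.

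The step that does not hold is in Step 1, where you conclude $\|v_i(t)-v_j(t)\|\le b=\sup_{i,j}\|v_i^{\rm in}-v_j^{\rm in}\|$ from the monotonicity of $M^{(k)}$ and $m^{(k)}$. Componentwise monotonicity only gives
\[
\|v_i(t)-v_j(t)\|\le \tilde b:=\Big(\sum_{k=1}^d \big({\mathcal D}_v^{(k)}(0)\big)^2\Big)^{1/2},
\]
and $\tilde b$ can exceed $b$ when $d\ge2$. Moreover, the Euclidean velocity diameter is genuinely not monotone here, because $\Gamma$ distorts directions and the $\psi_{ij}$ differ. For example, take $d=2$, $\alpha=\tfrac12$, $\psi(r)=(1+r^2)^{-\beta}$ with $\beta>0$, and three particles of mass $\tfrac13$ with $v_P=(0,0)$, $v_Q=(2,1)$, $v_R=(2.009,0.98)$, $x_R=x_Q$ and $D:=\|x_P-x_Q\|$ large. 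The pair $(P,Q)$ realizes $b=\sqrt5$, yet
\[
\tfrac12\tfrac{\di}{\di t}\|v_Q-v_P\|^2\Big|_{t=0}=\tfrac{\kappa}{3}\big(2\sqrt{0.009}-\sqrt{0.02}\big)+O\big(\psi(D)\big)>0
\]
once $\psi(D)$ is small. So $\psi(a+bt)$ is not justified as a lower bound for $\inf_{i,j}\psi_{ij}(t)$.

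The paper makes exactly the same leap in Remark~\ref{R2.1}; its proof of Theorem~\ref{T4.1} even writes $t\|{\mathcal D}_v^{\rm in}\|$ before reverting to $b$. So this is a defect shared with the source, not a departure from it. The repair is painless: use ${\mathcal D}_x(t)\le a+\tilde b t$, take $\underline\psi(t)=\psi(a+\tilde b t)$, and read \eqref{eq:int_switch} with $\tilde b$ in place of $b$ (for $d=1$ the two coincide). Everything else in your argument then goes through unchanged.

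Your later appeal to $\|v(t)\|_\infty\le\|v^{\rm in}\|$ for a uniform Lipschitz constant has the same flaw, but it is harmless. The estimate $|\dot v_i^k|\le\kappa\,{\mathcal D}_v^{(k)}(t)^\alpha\le\kappa\,{\mathcal D}_v^{(k)}(0)^\alpha$ already supplies such a constant, uniformly in $\sigma$.
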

\begin{proof} For a fixed $n\ge0$, we apply the component dissipation estimate obtained in the previous section
for fixed sender networks, we obtain for each $k\in[d]$ and for a.e. $t\in[t_n,t_{n+1})$,
\begin{equation}\label{eq:Dk-switch-proof}
	D^+ {\mathcal D}_v^{(k)}(t)
	\le - \kappa \underline{\psi}(t)\,\big( {\mathcal D}_v^{(k)}(t)\big)^\alpha\le0.
\end{equation}
As in Remark~\ref{R2.1}, we have
\[
\sup_{i,j}\|x_{i}(t)-x_j(t)\|\!\le  \sup_{i,j}\|x_{i}^{\rm in}-x_j^{\rm in}\|+t\| {\mathcal D}_v^{\rm in}\|,\quad \forall~t\ge0.
\]
Then we can choose $\underline{\psi}(t)=\psi(a+bt)$ as before. We combine this with \eqref{eq:Dk-switch-proof} to obtain for each $k\in[d]$ and for a.e. $t\ge0$,
\begin{equation}\label{eq:Dk-switch-Habt-proof}
D^+ {\mathcal D}_v^{(k)}(t)
	\le - \kappa \psi(a+bt)\,\big( {\mathcal D}_v^{(k)}(t)\big)^\alpha.
\end{equation}
We apply Lemma~\ref{L2.3} to \eqref{eq:Dk-switch-Habt-proof} to get
\[
\big(\! {\mathcal D}_v^{(k)}(t)\!\big)^{1-\alpha}
\!\!\le \big( {\mathcal D}_v^{(k)}(0)\big)^{1-\alpha}
\!\!- \kappa (1-\alpha)\!\!\int_0^t \psi(a+bs)\,\di s.
\]
If there exists $\underline{t}^{(k)}<\infty$ satisfying \eqref{eq:int_switch}, then
\[ {\mathcal D}_v^{(k)}(t)=0 \quad \mbox{for all $t\ge \underline{t}^{(k)}$}. \]
Letting $t_f :=\max_{k\in[d]} \underline{t}^{(k)}$, we obtain 
\[ {\mathcal D}_v^{(k)}(t)=0 \quad \mbox{for all $k\in[d]$ and all $t\ge t_f$}, \]
which implies 
\[ v_i(t)=v_j(t) \quad \mbox{for all $i,j$ and all $t\ge t_f$}. \]
The freezing of relative positions follows from $\dot x_i=v_i$ and $v_i(t)=v_j(t)$ for $t\ge t_f$.
\end{proof}

\vspace{0.5cm}

\section{Numerical simulations}\label{sec:5}
In this section, we present several numerical simulations for the finitely truncated Cucker-Smale type model under sender network in one space dimension $d=1$:
\begin{equation*}\label{eq:CS}
	\begin{cases}
\displaystyle \dot x_i = v_i, \quad i=[N]. \\ 
\displaystyle \dot v_i = \kappa \sum_{j=1}^{N} m_{j} \psi(|x_j - x_i|)\,\Gamma (v_j - v_i).
	\end{cases}
\end{equation*}
In the simulations, we take the following forms for $N, \alpha, \psi$ and $\Gamma$: 
\[
N = 50, \quad \kappa = 1, \quad  0 < \alpha < 1, \quad \psi(r)=\frac{1}{(1+r^2)^\beta},~~\beta\ge 0, \quad 
\Gamma(v)=\operatorname{sgn}(v)\,|v|^\alpha. \]
We track the velocity and position diameters
\[
\mathcal{D}_v(t):=\max_{1\le i,j\le N}|v_i(t)-v_j(t)|,\quad \mathcal{D}_x(t):=\max_{1\le i,j\le N}|x_i(t)-x_j(t)|.
\]
All simulations use all-to-all couplings, i.e., the summation runs over all $j\in[N]$.
Time integration is performed by an explicit fourth-order Runge-Kutta method with a small adaptive time step. For initial data, we  sample i.i.d.\ initial data
\[
(R_x,R_v)=(5,2), \quad x_i(0)\sim\mathrm{Unif}([-R_x,R_x]),\quad v_i(0)\sim\mathrm{Unif}([-R_v,R_v]).
\]
%To approximate an infinite system with a mass distribution
%$\sum_{j=1}^{\infty} m_j = 1$,
We consider a truncated power-law profile on $\{1,\dots,N\}$ and renormalize it as a sender network:
\[
m_j=\frac{j^{-p}}{\sum_{k=1}^N k^{-p}},\qquad p=2,
\qquad \sum_{j=1}^{N} m_j=1.
\]
In what follows, we study the effect of nonlinearity of velocity coupling and switching network topologies with fixed $\beta = 0.25$.

First, we consider $\alpha\in\{0.1,0.4,0.8\}$. Finite-time flocking is observed in all three cases. In Figure \ref{fig:alpha_compare}, we can see that smaller $\alpha$ yields an earlier flocking time.
See Fig.~\ref{fig:alpha_Dv}-Fig.~\ref{fig:alpha_Dx}.
\begin{figure}[t]
	\centering
	\begin{subfigure}[t]{0.49\columnwidth}
		\centering
		\includegraphics[width=\linewidth]{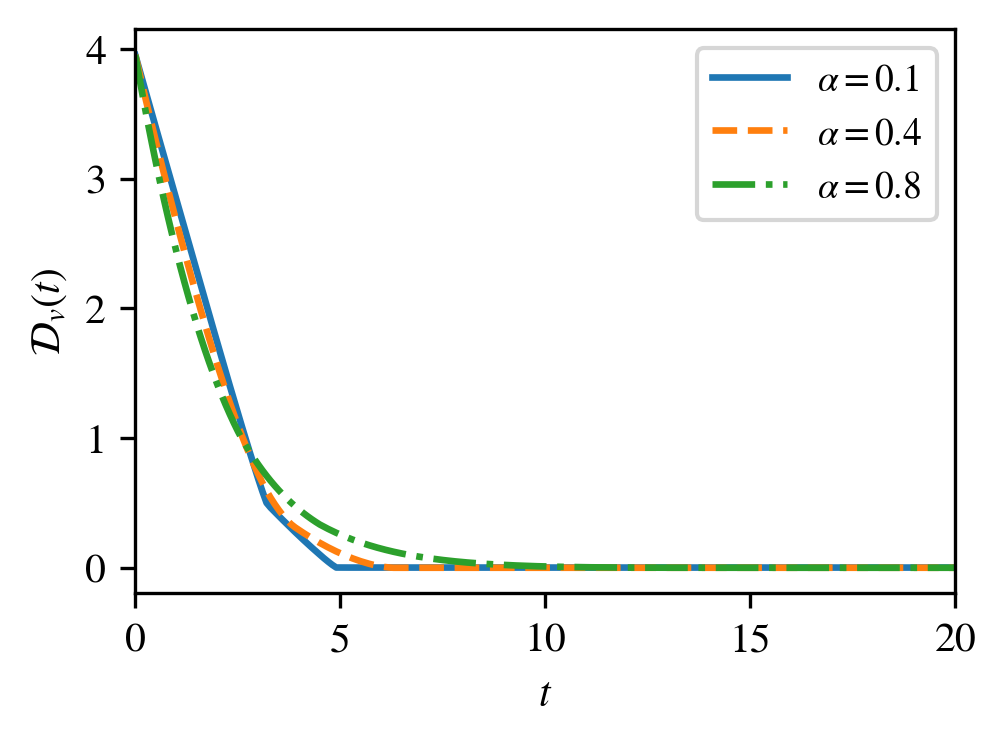}
		\caption{Temporal evolution of $\mathcal D_v(t)$}
		\label{fig:alpha_Dv}
	\end{subfigure}\hfill
	\begin{subfigure}[t]{0.49\columnwidth}
		\centering
		\includegraphics[width=\linewidth]{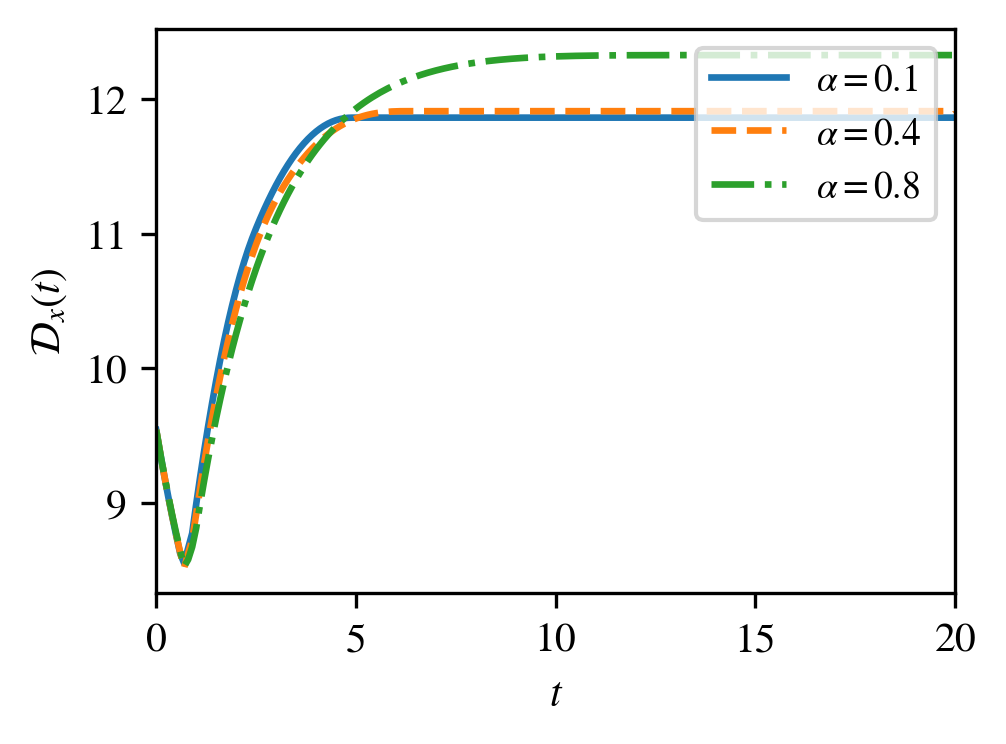}
		\caption{Temporal evoluiton of $\mathcal D_x(t)$}
		\label{fig:alpha_Dx}
	\end{subfigure}
	\caption{Evolution of $\mathcal D_v$ for $\alpha\in\{0.1,0.4,0.8\}$}.
	\label{fig:alpha_compare}
\end{figure}

Second, we consider a switching system by permuting the power-law mass profile at integer times $t=1,2,3,\dots$.
More precisely,
\[
m_j(t)=m_j^{(\sigma(t))},
\]
where $\sigma(t)$ is a piecewise constant switching signal taking values in permutations of $\{1,\dots,N\}$. We plot particles' velocities together with the weighted mean velocity
\[
v_c(t):=\sum_{j=1}^N m_j^{(\sigma(t))}\,v_j(t).
\]
The switching instants produce visible changes in $v_c(t)$, while the particle velocities align to $v_c(t_f)$ in finite time.
See Fig.~\ref{fig:noswitch_vtraj_vc} (no switching) and Fig.~\ref{fig:switch_vtraj_vc} (switching).

% ===================== (5) switching vs no-switch =====================
\begin{figure}[t]
	\centering
	\begin{subfigure}[t]{0.49\columnwidth}
		\centering
		\includegraphics[width=\linewidth]{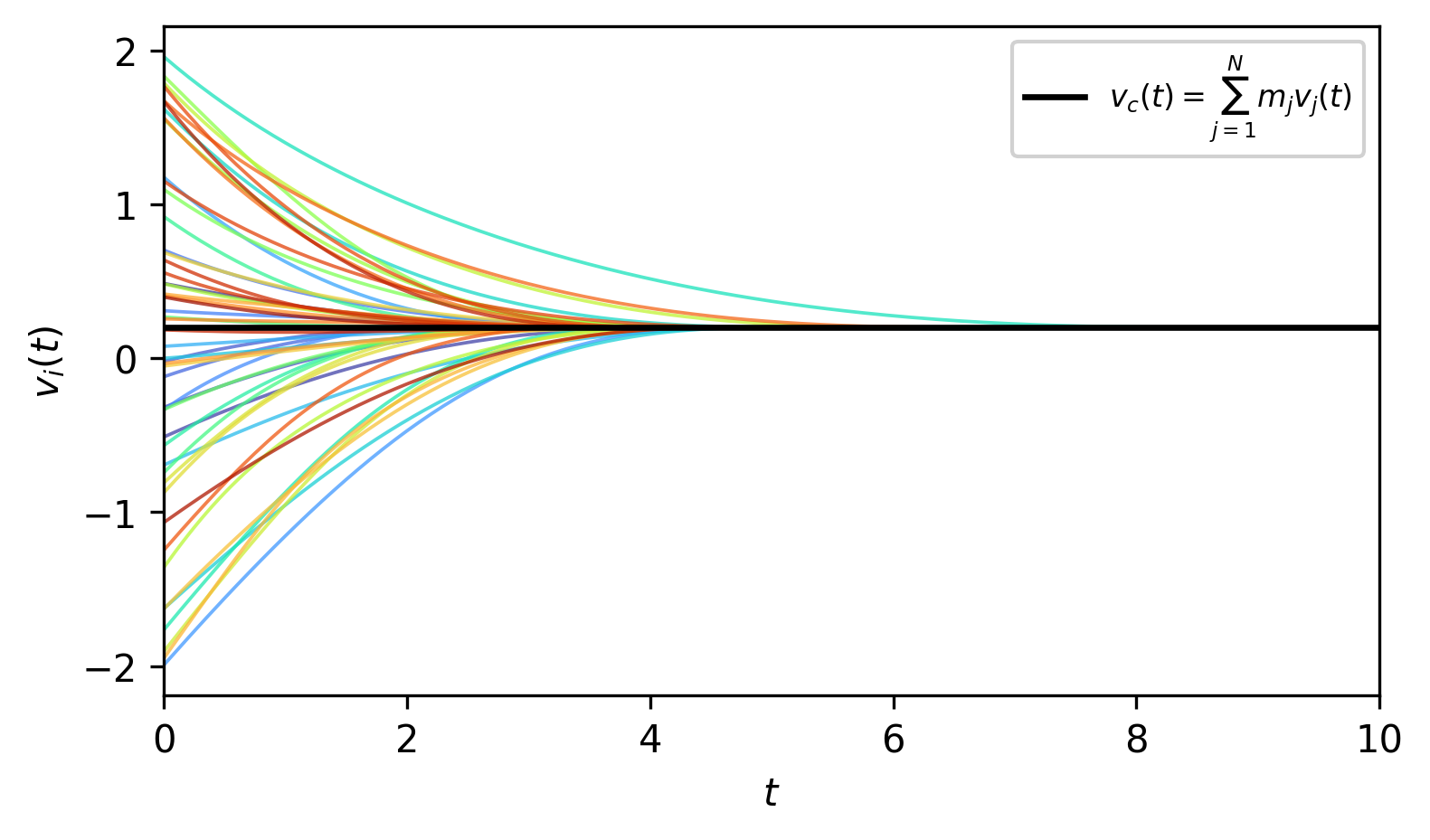}
		\caption{No switching.}
		\label{fig:noswitch_vtraj_vc}
	\end{subfigure}\hfill
	\begin{subfigure}[t]{0.49\columnwidth}
		\centering
		\includegraphics[width=\linewidth]{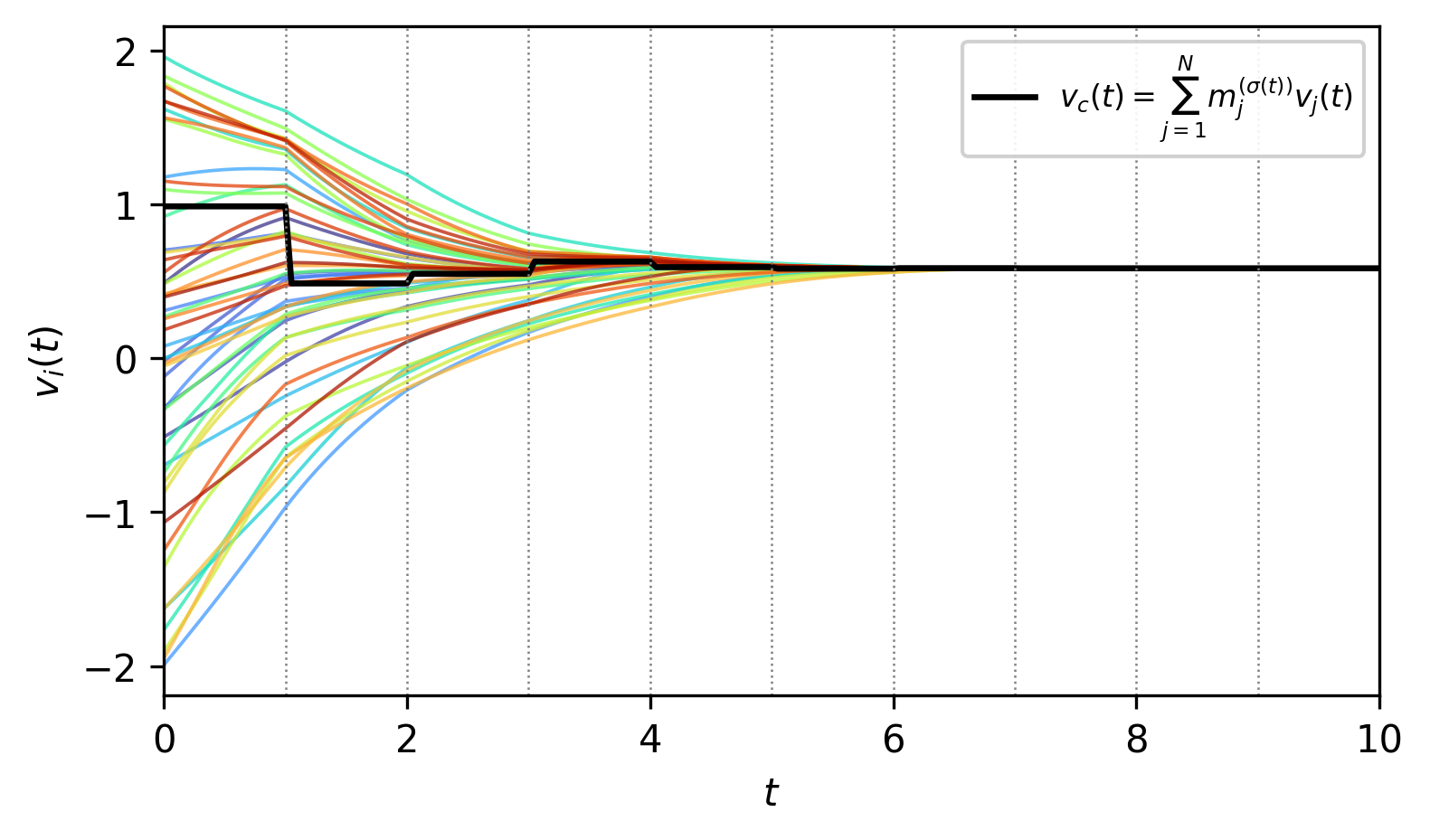}
		\caption{Switching.}
		\label{fig:switch_vtraj_vc}
	\end{subfigure}
	\caption{Velocity trajectories $v_i(t)$ with the weighted mean $v_c(t)$ ($N=50$, $\alpha=0.6$, $\beta=0.25$, $T=15$). Switching occurs at integer times.}
	%\label{fig:switch_compare}
\end{figure}

\vspace{0.5cm}

\section{Conclusion}\label{sec:6}
In this paper, we have studied finite-time flocking for an infinite-particle
Cucker-Smale type system with nonlinear coupling under sender networks
and switching sender networks.
Using a componentwise diameter approach and sublinear dissipation estimates,
we established sufficient conditions for finite-time flocking.
In particular, if the communication weight function is non-integrable,
then finite-time flocking emerges for any uniformly bounded initial data.
The proposed framework applies to both finite- and infinite-particle systems and yields flocking-time estimates that do not explicitly depend on the number of agents.
The analysis under switching sender networks further demonstrates the robustness
of the finite-time flocking mechanism with respect to time-varying influence structures. Future work will address extensions to more general interaction laws,
stochastic effects, and more complex switching protocols.

\vspace{0.3cm}

\bibliographystyle{amsplain}

\vspace{1cm}

\appendix
\section{Global existence of solutions to infinite-particle systems}  \label{App-A}
\setcounter{equation}{0} 
In this appendix, we give the global existence of classical solutions to infinite-particle systems on a Banach space  $\mathcal{B}$ by using the Schauder-Tychonoff fixed point  theorem and global continuation argument. 

Consider the Cauchy problem for the following abstract ODE system $u= (x,v)$:
\begin{equation}\label{dyn}
	\begin{cases}
		\dfrac{\di u}{\di t} = F(u), \quad t>0, \\
		u(0) = u_0 \in\mathcal{B}.
	\end{cases}
\end{equation}

\begin{theorem} 
\emph{\cite{M1}} \label{S-T}
	Let $A$ be a nonempty closed convex subset of a locally convex space ${\mathcal X}$.
	If $f: A \to A$ is continuous and $f(A)$ is relatively compact in $X$,
	then $f$ has a fixed point in $A$.
\end{theorem}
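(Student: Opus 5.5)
This is the Schauder–Tychonoff fixed point theorem, quoted from \cite{M1}. The plan is the classical route: first reduce to Schauder's theorem on a compact convex set, then prove that theorem through Brouwer's fixed point theorem via finite-dimensional approximation.

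\emph{Step 1 (reduction).} Set $K:=\overline{\mathrm{conv}}\, f(A)$. Since $A$ is closed and convex and $f(A)\subset A$, we have $K\subset A$. Hence $f(K)\subset f(A)\subset K$.

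Compactness of $K$ is the delicate point. In a general locally convex space, the closed convex hull of a relatively compact set need not be compact; this holds if the space is quasi-complete, and in particular for Fréchet spaces such as the Hilbert space $\mathcal H$ used in the paper. I would therefore run the argument assuming completeness, which covers the intended application. Alternatively, one can work directly with the compact set $C:=\overline{f(A)}$ and avoid the hull, as below.

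\emph{Step 2 (approximate fixed points).} Fix a convex balanced neighbourhood $U$ of $0$, with Minkowski gauge $p_U$. By compactness of $C$, choose finitely many points $y_1,\dots,y_n\in C$ with $C\subset\bigcup_i (y_i+U)$.

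Next, define continuous weights $\mu_i(y):=\max\{0,\,1-p_U(y-y_i)\}$. They do not all vanish on $C$. Define the Schauder projection
\[
P_U(y):=\frac{\sum_i \mu_i(y)\,y_i}{\sum_i\mu_i(y)}, \qquad y\in C.
\]
Then $P_U$ is continuous, takes values in $E:=\mathrm{conv}\{y_1,\dots,y_n\}\subset A$, and satisfies $P_U(y)-y\in U$.

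The set $E$ is a compact convex subset of a finite-dimensional space. The map $P_U\circ f|_E:E\to E$ is continuous, so Brouwer's theorem gives $x_U\in E$ with $P_U f(x_U)=x_U$. Consequently
\[
f(x_U)-x_U\in U .
\]

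\emph{Step 3 (passage to the limit).} Direct the neighbourhoods $U$ by reverse inclusion. The net $f(x_U)$ lies in the compact set $C$, so it has a subnet converging to some $x^*\in C\subset A$. Since $x_U-f(x_U)\in U\to 0$, the same subnet of $x_U$ also converges to $x^*$. Continuity of $f$ on $A$ then gives $f(x_U)\to f(x^*)$ along the subnet. Since $\mathcal X$ is Hausdorff, limits are unique, so $f(x^*)=x^*$.

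The main obstacle is Step 2. One must check carefully that the partition-of-unity map is continuous, using continuity of the gauge $p_U$, and that its image stays inside $A$, which follows from convexity of $A$. Everything else is a compactness argument.
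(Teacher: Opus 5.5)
The paper gives no proof of this statement. It is the Schauder--Tychonoff theorem quoted from \cite{M1}, used once in Step F of Theorem~\ref{thm:global-wp} with $\mathcal X=\tilde{\mathcal H}_T=C([0,T];\tilde{\mathcal H})$, so there is no argument of the paper to compare against. Your Steps 2--3 are the standard self-contained proof, and they are correct: a Schauder projection onto the convex hull of a finite $U$-net of $C=\overline{f(A)}$, Brouwer on that polytope, then a subnet limit in $C$.

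Steps 2--3 never use $\overline{\mathrm{conv}}\,f(A)$, so you can drop Step 1 and the completeness caveat entirely. You only need three things: $C\subset A$ (closedness of $A$), $\mathrm{conv}\{y_1,\dots,y_n\}\subset A$ (convexity of $A$), and compactness of $C$. The argument then works in every Hausdorff locally convex space. The Step 1 route (compactness of the closed convex hull via Mazur, then Schauder on a compact convex set) would also cover the paper's application, since $\tilde{\mathcal H}_T$ is a Banach space. It proves strictly less than the stated theorem, however, and it still needs the Step 2--3 machinery for the compact case.

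Two details need tightening.

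First, take $U$ open, or equivalently cover $C$ by the sets $y_i+\{p_U<1\}$. If $U$ is not open, the inclusion $y-y_i\in U$ only gives $p_U(y-y_i)\le 1$, so all $\mu_i(y)$ could vanish. For open convex $U$ one has $U=\{p_U<1\}$. Then balancedness and convexity give both facts you need: $\sum_i\mu_i>0$ on $C$, and $P_U(y)-y=\sum_i\lambda_i(y)(y_i-y)\in U$.

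Second, Hausdorffness is used twice, not just once. It gives uniqueness of limits in Step 3. It also guarantees that $\mathrm{span}\{y_1,\dots,y_n\}$ carries the Euclidean topology, which is what lets Brouwer apply to $E$. It is part of the usual definition of a locally convex space, and without it the statement fails: take $f(x)=x+1$ on $\mathbb R$ with the indiscrete topology.
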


\vspace{0.2cm}

Let $\mathcal B$ and $\tilde{\mathcal H}$ be the product spaces defined as 
	\[
	\mathcal{B}:= \ell^\infty(\mathbb{R}^d)\times \ell^\infty(\mathbb{R}^d),\qquad
	\tilde{\mathcal{H}}:= \ell_{\tilde{m}}^2(\mathbb{R}^d)\times \ell_{\tilde{m}}^2(\mathbb{R}^d),
	\]
	where ${\tilde m} = (\tilde{m}_i)_{i\ge1}$ is a summable sequence of positive real numbers:
	\[ {\mathfrak {\tilde M}} := \sum_{i=1}^\infty \tilde{m}_i<\infty, \quad  \tilde{m}_i>0, \ \forall ~i \in {\mathbb N}.\]

\begin{theorem}\label{thm:global-wp}
Suppose that the vector field $F:\mathcal{B}\to\mathcal{B}$ satisfies the H\"older continuity and sub-linear growth: 
\vspace{0.1cm}
	\begin{enumerate}
		\item
		There exist $\theta\in(0,1]$ such that for every
		$R_0>0$ there is a constant $L_{R_0}>0$ with
		\begin{equation}\label{eq:F-holder-general}
			\|F(\xi)-F(\eta)\|_{\tilde{\mathcal H}}
			\le L_{R_0}\,\|\xi-\eta\|_{\tilde{\mathcal H}}^\theta,
		\end{equation}
		$\forall\,\xi,\eta\in\mathcal B\ \text{with }\|\xi\|_{\mathcal B},\|\eta\|_{\mathcal B}\le R_0.$
		\vspace{0.1cm}
		\item
		There exist constants $C_1\ge0$ and $C_2\ge0$ such that
		\begin{equation}\label{eq:lin-growth}
			\|F(u)\|_{\mathcal B}\le C_1+C_2\|u\|_{\mathcal B},
			\qquad \forall\,u\in\mathcal B.
		\end{equation}
	\end{enumerate}
	Then the abstract Cauchy problem \eqref{dyn} admits a global classical solution $u\in C([0,\infty);\mathcal B)\cap C^1((0,\infty);\tilde{\mathcal H})$.
\end{theorem}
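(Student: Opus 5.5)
We prove Theorem \ref{thm:global-wp} in two stages: first a local existence result on an interval whose length depends only on the size of the initial datum, obtained from Theorem \ref{S-T}; then a continuation argument driven by an a priori bound coming from the growth condition \eqref{eq:lin-growth}.

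\emph{Local existence.} Fix $u_0\in\mathcal B$ and $R_0:=2\|u_0\|_{\mathcal B}+1$, and choose $\tau>0$ so small that $\tau\,(C_1+C_2R_0)\le R_0/2$. We work in the space
\[
\mathcal X:=C([0,\tau];\tilde{\mathcal H})
\]
with the sup norm, which is a Banach space and hence locally convex. Inside it we take the set
\[
A:=\Bigl\{u\in\mathcal X:\ \|u(t)\|_{\mathcal B}\le R_0,\ \|u(t)-u(s)\|_{\mathcal B}\le (C_1+C_2R_0)|t-s|\ \ \forall\, t,s\in[0,\tau],\ u(0)=u_0\Bigr\}.
\]
$A$ is nonempty (it contains the constant map $u_0$) and convex. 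It is closed in $\mathcal X$ because closed $\mathcal B$-balls are closed under componentwise convergence, and $\tilde{\mathcal H}$-convergence with $\tilde m_i>0$ implies componentwise convergence.

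On $A$ we define the Picard map
\[
(fu)(t):=u_0+\int_0^t F(u(s))\,\di s.
\]
\begin{itemize}
\item \emph{The integral makes sense.} By \eqref{eq:F-holder-general}, $s\mapsto F(u(s))$ is continuous into $\tilde{\mathcal H}$, so the integral is a Riemann integral in $\tilde{\mathcal H}$. Componentwise it is an ordinary integral, bounded by $C_1+C_2R_0$ thanks to \eqref{eq:lin-growth}.
\item \emph{$f$ maps $A$ into $A$.} The componentwise bound gives $\|(fu)(t)\|_{\mathcal B}\le \|u_0\|_{\mathcal B}+\tau(C_1+C_2R_0)\le R_0$ and the Lipschitz-in-$\mathcal B$ bound, by the choice of $\tau$.
\item \emph{$f$ is continuous.} Again by \eqref{eq:F-holder-general},
\[
\|fu-fw\|_{\mathcal X}\le \tau L_{R_0}\|u-w\|_{\mathcal X}^\theta .
\]
\item \emph{$f(A)$ is relatively compact.} We use Arzel\`a--Ascoli in $C([0,\tau];\tilde{\mathcal H})$. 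Equicontinuity holds because of the uniform $\mathcal B$-Lipschitz bound and the estimate $\|\cdot\|_{\tilde{\mathcal H}}\le \sqrt{\tilde{\mathfrak M}}\,\|\cdot\|_{\mathcal B}$ (up to constants). Pointwise relative compactness holds because $\mathcal B$-balls are relatively compact in $\tilde{\mathcal H}$: a diagonal argument gives componentwise convergence, and the tails $\sum_{i>N}\tilde m_i R_0^2$ are uniformly small.
\end{itemize}
Theorem \ref{S-T} then yields a fixed point $u=fu$. Since $F\circ u$ is continuous into $\tilde{\mathcal H}$, we have $u\in C^1((0,\tau];\tilde{\mathcal H})$ with $\dot u=F(u)$. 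Continuity of $u$ into $\mathcal B$ follows from the $\mathcal B$-Lipschitz bound.

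\emph{Global continuation.} Let $u$ be any solution of the above type on $[0,T)$. Componentwise integration together with \eqref{eq:lin-growth} gives
\[
\|u(t)\|_{\mathcal B}\le \|u_0\|_{\mathcal B}+\int_0^t\bigl(C_1+C_2\|u(s)\|_{\mathcal B}\bigr)\,\di s,
\]
so Gr\"onwall's inequality yields
\[
\|u(t)\|_{\mathcal B}\le (\|u_0\|_{\mathcal B}+C_1T)\,e^{C_2T}=:R_T .
\]
Since the local existence time $\tau$ depends only on an upper bound for $\|u_0\|_{\mathcal B}$, we can restart the construction at times $t_*<T$ with the uniform step $\tau(R_T)$. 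Concatenating the pieces extends the solution beyond any finite $T$; the integral equation is preserved under concatenation, so the extension is again classical. This gives a global solution $u\in C([0,\infty);\mathcal B)\cap C^1((0,\infty);\tilde{\mathcal H})$.

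\emph{Main obstacle.} The delicate step is the compactness and measurability bookkeeping. We must check that $\mathcal B$-valued bounds survive limits taken in $\tilde{\mathcal H}$ (both for the closedness of $A$ and for the fixed point itself), and that continuity of $t\mapsto F(u(t))$ is available only in $\tilde{\mathcal H}$, not in $\mathcal B$. This is why the time derivative is taken in $\tilde{\mathcal H}$, while $\mathcal B$-continuity comes solely from the componentwise Lipschitz bound.
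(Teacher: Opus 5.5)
Your proposal is correct and follows essentially the same route as the paper: Schauder--Tychonoff applied to the Picard map on a $\mathcal B$-bounded, closed convex subset of $C([0,T];\tilde{\mathcal H})$, continuity from the H\"older bound \eqref{eq:F-holder-general}, relative compactness from Arzel\`a--Ascoli plus the compact embedding $\mathcal B\hookrightarrow\tilde{\mathcal H}$, and global continuation via the Gr\"onwall bound on $\|u(t)\|_{\mathcal B}$. The differences are cosmetic: you build the $\mathcal B$-Lipschitz bound and $u(0)=u_0$ into the set $A$ and tie the step length to a norm bound, whereas the paper works on the ball $\sup_t\|u(t)-u_0\|_{\mathcal B}\le R$ and uses the data-independent step $T=1/(2C_2)$ (when $C_2>0$).
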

\begin{proof} 
Since the proof is rather lengthy, we split the proof in several steps. \newline

\noindent $\bullet$~\textbf{Step A} (Functional setting and the fixed-point map): Let $T>0$ and $R>0$ to be chosen later, and we set
\[
\tilde{\mathcal H}_T:=C([0,T];\tilde{\mathcal H}),\qquad
\|u\|_{\tilde{\mathcal H}_T}:=\sup_{t\in[0,T]}\|u(t)\|_{\tilde{\mathcal H}}.
\]
Denote $R_0:=\|u_0\|_{\mathcal B}+R$, and we define a set $\mathcal S_{T,R}$ and a map $\mathcal T: \mathcal S_{T,R}\to \tilde{\mathcal H}_T$:
\begin{align*}
\begin{aligned}
& \mathcal S_{T,R} :=\Big\{u\in \tilde{\mathcal H}_T:\ \ 
\sup_{t\in[0,T]}\|u(t)-u_0\|_{\mathcal B}\le R
\Big\}, \\
& (\mathcal Tu)(t) :=u_0+\int_0^t F(u(s))\,\di s,\qquad t\in[0,T].
\end{aligned}
\end{align*}
\noindent $\bullet$~\textbf{Step B} (Closedness and convexity of $\mathcal S_{T,R}$ in $\tilde{\mathcal H}_T$):
Obviously, $\mathcal S_{T,R}$ is nonempty (since $u\equiv u_0\in\mathcal S_{T,R}$) and convex.
We now claim that 
\begin{center}
$\mathcal S_{T,R}$ is closed in $\tilde{\mathcal H}_T$.
\end{center}
{\it Proof of claim}:~let $u^n\in\mathcal S_{T,R}$ and
$u^n\to u$ in $\tilde{\mathcal H}_T$. Denote $u_i=(x_i,v_i)\in\mathbb R^{2d}$.
Because $\tilde m_i>0$ implies
\[\sup_{t\in[0,T]}\|u_i^n(t)-u_i(t)\|^2\le \tilde m_i^{-1}\|u^n-u\|_{\tilde{\mathcal H}_T}^2.\]
Then for each fixed $i\ge1$,
\[
\lim_{n\to \infty}\sup_{t\in[0,T]}\|u_i^n(t)-u_i(t)\|=0.
\]
Since $\sup_{t\in[0,T]}\|u^n(t)-u_0\|_{\mathcal B}\le R$ for all $n$, we have for every $t\in[0,T]$ and every $i \in {\mathbb N}$,
\begin{align*}
	\|u_i(t)-u_{0,i}\| &\le\lim_{n\to\infty}\sup_{t\in[0,T]}\|u_i(t)-u_i^n(t)\|+\overline{\lim}_{n\to\infty}\|u_i^n(t)-u_{0,i}\|\\
	&=\overline\lim_{n\to\infty}\|u_i^n(t)-u_{0,i}\|
	\le R.
\end{align*}
We take supremum over $i$ to get $\|u(t)-u_0\|_{\mathcal B}\le R$ for all $t$, hence
$u\in\mathcal S_{T,R}$. Therefore, $\mathcal S_{T,R}$ is closed in $\tilde{\mathcal H}_T$.\newline

\noindent $\bullet$~\textbf{Step C} ($\mathcal T$ maps $\mathcal S_{T,R}$ into itself):~If $u\in\mathcal S_{T,R}$, then \[\|u(t)\|_{\mathcal B}\le \|u_0\|_{\mathcal B}+R,\quad\forall\ t\in[0,T].\]
Furthermore, it follows from \eqref{eq:lin-growth} that for $u\in\mathcal S_{T,R}$, 
\begin{equation*}\label{f-bound}
	\begin{aligned}
		\|F(u(t))\|_{\mathcal B}&
		%\le \|v(t)\|_\infty+ M2^{\alpha}\|v(t)\|_\infty^{\alpha}\\
		%		&\le \|u(t)\|_{\mathcal{B}}+M2^\alpha\|u(t)\|_{\mathcal{B}}^{\alpha}\\
		%		&\le \|u_0\|_{\mathcal{B}}+R+M2^\alpha\left(\|u_0\|_{\mathcal{B}}+R\right)^{\alpha}\\
		\le C_1+C_2\|u(t)\|_{\mathcal{B}} \le C_1+C_2(\|u_0\|_{\mathcal{B}}+R), \quad\forall~ t\in[0,T].
	\end{aligned}
\end{equation*}
Hence, we have
\begin{equation*}
	\begin{aligned}
		\|(\mathcal Tu)(t)-u_0\|_{\mathcal B} \le \int_0^t \|F(u(s))\|_{\mathcal B}\,\di s \le T(C_1+C_2(\|u_0\|_{\mathcal{B}}+R)).
\end{aligned}\end{equation*}
If $C_2=0$, we choose any $T>0$ and set $R:=TC_1$ to obtain 
\[ \mathcal T(\mathcal S_{T,R})\subset\mathcal S_{T,R}. \]

\noindent If $C_2>0,$ we choose
\begin{equation*}\label{eq:choice-TR}
	T:=\frac{1}{2C_2} \quad \mbox{and} \quad  R:=\frac{C_1}{C_2}+\|u_0\|_{\mathcal B}.
\end{equation*}
Then, we have
\[ T(C_1+C_2(\|u_0\|_{\mathcal{B}}+R))=R, \]
so 
\[ \mathcal T(\mathcal S_{T,R})\subset \mathcal S_{T,R}. \]

\noindent $\bullet$~\textbf{Step D} (Continuity of $\mathcal T$ in $\tilde{\mathcal H}_T$): We claim that \[\mathcal T \ \text{is continuous on}\ \mathcal S_{T,R}\ 
\text{with respect to} \ \|\cdot\|_{\tilde{\mathcal H}_T}.\]
Let $u^n,u\in\mathcal S_{T,R}$ such that
\[
\|u^n-u\|_{\tilde{\mathcal H}_T}\to0\quad\text{as }n\to\infty.
\]
Since $u^n,u\in\mathcal S_{T,R}$, we have the uniform bound
\[
\sup_{t\in[0,T]}\|u^n(t)\|_{\mathcal B}\le \!\|u_0\|_{\mathcal B}+R, \quad 
\sup_{t\in[0,T]}\|u(t)\|_{\mathcal B}\le \!\|u_0\|_{\mathcal B}+R.
\]
By \eqref{eq:F-holder-general}, we have
\begin{align*}
\begin{aligned}
&\sup_{t\in[0,T]}\|F(u^n(t))-F(u(t))\|_{\tilde{\mathcal H}}  \\
& \hspace{1cm} \le L_{R_0}\Big(\sup_{t\in[0,T]}\|u^n(t)-u(t)\|_{\tilde{\mathcal H}}\Big)^\theta = L_{R_0}\,\|u^n-u\|_{\tilde{\mathcal H}_T}^{\theta}\to0.
\end{aligned}
\end{align*}
Therefore, as $n\to\infty,$
\begin{align*}
\begin{aligned}
\|\mathcal Tu^n-\mathcal Tu\|_{\tilde{\mathcal H}_T} &\le \sup_{t\in[0,T]}\int_0^t \|F(u^n(s))-F(u(s))\|_{\tilde{\mathcal H}}\,\di s \\
&\le T \sup_{s\in[0,T]}\|F(u^n(s))-F(u(s))\|_{\tilde{\mathcal H}} \to0.
\end{aligned}
\end{align*}
Hence $\mathcal T$ is continuous on $\mathcal S_{T,R}$ in $\tilde{\mathcal H}_T$.\newline

\noindent $\bullet$~\textbf{Step E} (Relative compactness of $\mathcal T(\mathcal S_{T,R})$ in $\tilde{\mathcal H}_T)$:  For this, we use it by the Arzel\`a-Ascoli Theorem. For any $u\in\mathcal S_{T,R}$ and $0\le t_1<t_2\le T$,
\begin{equation*}
	\begin{aligned}
		\|(\mathcal Tu)(t_2)-(\mathcal Tu)(t_1)\|_{\tilde{\mathcal H}}
		&\le \int_{t_1}^{t_2}\|F(u(s))\|_{\tilde{\mathcal H}}\,\di s \le \sqrt{{\mathfrak {\tilde M}}  }\int_{t_1}^{t_2}\|F(u(s))\|_{\mathcal B}\,\di s\\
		&\le \sqrt{ {\mathfrak {\tilde M}}  }(C_1+C_2(\|u_0\|_{\mathcal{B}}+R))|t_2-t_1|.
	\end{aligned}
\end{equation*}
Hence, the family $\{\mathcal Tu:\ u\in\mathcal S_{T,R}\}$ is equi-continuous in $C([0,T];\tilde{\mathcal H})$.
On the other hand, fix $t\in[0,T]$, then for all $u\in\mathcal S_{T,R}$,
\[
\|(\mathcal Tu)(t)\|_{\tilde{\mathcal H}}\le\sqrt{{\mathfrak {\tilde M}}}  \|(\mathcal Tu)(t)\|_{\mathcal B}\le\sqrt{{\mathfrak {\tilde M}} }\big( \|u_0\|_{\mathcal B}+(C_1+C_2(\|u_0\|_{\mathcal{B}}+R))T\big).
\]
Now, we have $\{(\mathcal Tu)(t):u\in\mathcal S_{T,R}\}$ is uniformly bounded in $\mathcal B$ and $\tilde{\mathcal H}$.
By Lemma~\ref{lem:compact-embed} (compact embedding $\mathcal B\hookrightarrow\tilde{\mathcal H}$),
$\{(\mathcal Tu)(t):u\in\mathcal S_{T,R}\}$ is relatively compact in $\tilde{\mathcal H}$ for any fixed $t$. By the Arzel\`a-Ascoli Theorem, $\mathcal T(\mathcal S_{T,R})$ is relatively compact in $\tilde{\mathcal H}_T$.\newline

\noindent $\bullet$~\textbf{Step F} (Local existence via Schauder-Tychonoff Theorem): Recall that we have shown that $\mathcal S_{T,R}$ is a nonempty closed convex subset of the locally convex space $\tilde{\mathcal H}_T$,
that $\mathcal T:\mathcal S_{T,R}\to\mathcal S_{T,R}$ is continuous (Step~D),
and that $\mathcal T(\mathcal S_{T,R})$ is relatively compact in $\tilde{\mathcal H}_T$ (Step~E).
Hence, by the Schauder-Tychonoff fixed point theorem \ref{S-T}, $\mathcal T$ admits a fixed point $u\in\mathcal S_{T,R}$:
\[
u(t)=u_0+\int_0^t F(u(s))\,\di s,\qquad t\in[0,T].
\]
In particular, by \eqref{eq:lin-growth} we have $u(t)\in\mathcal B$ for all $t\in[0,T]$ and
\begin{align*}
	\|u(t_2)-u(t_1)\|_{\mathcal B}
	&=\Big\|\int_{t_1}^{t_2}F(u(s))\,\di s\Big\|_{\mathcal B}\le \int_{t_1}^{t_2}\|F(u(s))\|_{\mathcal B}\,\di s
	\le C_{T,R}|t_2-t_1|,
\end{align*}
where $C_{T,R}:=C_1+C_2(\|u_0\|_{\mathcal B}+R)$.
Hence $u\in C([0,T];\mathcal B)$ (in fact, $u$ is Lipschitz continuous in $\mathcal B$).
Moreover, since $F(u(\cdot))\in C([0,T];\tilde{\mathcal H})$, differentiating the above identity yields
$u\in C^1([0,T];\tilde{\mathcal H})$ and $u'(t)=F(u(t))$ in $\tilde{\mathcal H}$ for $t\in(0,T)$.\newline

\noindent $\bullet$~\textbf{Step G} (Global continuation):
Let $[0,T_{\max})$ be the maximal interval of existence of the above classical solution and define
\[
Y(t):=\|u(t)\|_{\mathcal B},\qquad t\in[0,T_{\max}).
\]
From the integral equation and \eqref{growth},
\[
Y(t)\le \|u_0\|_{\mathcal B}+\int_0^t (C_1+C_2Y(s))\,\di s,\qquad t<T_{\max}.
\]
By Gr\"onwall's lemma, one has 
\begin{equation*}
	Y(t)\le (\|u_0\|_{\mathcal B}+C_1 t)e^{C_2 t},\qquad t<T_{\max}.
\end{equation*}
In particular, $Y(t)$ cannot blow up in finite time. If $C_2>0$, Step~B provides a local existence time
\[
T_*:=\frac{1}{2C_2},
\]
which is independent of the initial datum. Hence, starting from $u_0$, we obtain a solution on $[0,T_*]$. Taking $u(T_*)$ as a new initial datum and repeating the same argument, we extend the solution to $[0,2T_*]$, and so on. Therefore the solution exists on $[0,nT_*]$ for every $n\in\mathbb N$, which implies $T_{\max}=+\infty$. If $C_2=0$, then $\|F(u)\|_{\mathcal B}\le C_1$ and Step~B holds for any $T>0$ with $R=TC_1$,
so the same iteration yields $T_{\max}=+\infty$. Finally, the integral identity holds for all $t\ge0$, and differentiating for $t>0$ gives $u'(t)=F(u(t))$ in $\tilde{\mathcal H}$.
\end{proof}

\vspace{0.3cm}

\section{A global existence of \eqref{ICS}} \label{App-B}
\setcounter{equation}{0}
In this appendix, we provide a detailed proof of Theorem \ref{T2.1} as a direct application of a global existence of abstract Cauchy problem \eqref{dyn} studied in Appendix \ref{App-A}.
We first set the vector field $F$:
\begin{align}
\begin{aligned} \label{AB-0}
F(u) &:=\Bigg(v_1,\cdots,v_{i},\cdots,\kappa \sum_{j=1}^{\infty}m_{j} \psi(\|x_{j}\!-x_1\|)\Gamma(v_j-v_1), \Bigg. \\
	& \hspace{4cm}  \Bigg. \cdots,  \underbrace{\kappa \sum_{j=1}^{\infty}m_{j} \psi(\|x_{j}-x_i\|)\Gamma(v_j-v_i)}_{=: A_i(x,v)},\cdots\Bigg).
\end{aligned}
\end{align}
In what follows, we show that this $F$ satisfies \eqref{eq:F-holder-general} and \eqref{eq:lin-growth} one by one. In the next lemma, we study H\"older continuity of $F:\mathcal B\to\tilde{\mathcal H}$ on bounded sets. 

\subsection{Preparatory lemmas} \label{App-B-1}
Let $\alpha\in (0,1)$ and let $\{m_i\}_{i\ge1}\subset[0,\infty)$ satisfying
	\[
	1=\sum_{i=1}^\infty m_i\in(0,\infty).
	\]
We define an auxiliary sequence of positive numbers
\[ \tilde m_i:=m_i+2^{-i} \quad \mbox{and} \quad  \mathfrak{\tilde M}:=\sum_{i=1}^\infty \tilde m_i= 2. \]
\begin{lemma}\label{lem:F-holder}
Suppose that $\psi$ and $\Gamma$ satisfy $(\mathcal{A}_1)-(\mathcal{A}_2)$. Then, for every $R_0>0$ there exists a constant $L_{R_0}>0$ such that for all
	$u=(x,v),\,\bar u=(\bar x,\bar v)\in\mathcal B$ with
	$\|u\|_{\mathcal B},\|\bar u\|_{\mathcal B}\le R_0$, we have
	\begin{equation*}\label{eq:F-holder-lemma}
		\|F(u)-F(\bar u)\|_{\tilde{\mathcal H}}
		\le L_{R_0}\,\|u-\bar u\|_{\tilde{\mathcal H}}^{\alpha}.
	\end{equation*}
	In particular, $F$ is H\"older continuous from $\mathcal B$ to $\tilde{\mathcal H}$ on bounded sets with exponent $\alpha$.
\end{lemma}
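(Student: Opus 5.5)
We need to prove Lemma \ref{lem:F-holder}: on the ball $\|u\|_{\mathcal B},\|\bar u\|_{\mathcal B}\le R_0$ we want $\|F(u)-F(\bar u)\|_{\tilde{\mathcal H}}\le L_{R_0}\|u-\bar u\|_{\tilde{\mathcal H}}^\alpha$. Write $F=(F^x,F^v)$ with $F^x(u)=v$ and $F^v(u)=(A_i(x,v))_i$. The position part is trivial: $\|F^x(u)-F^x(\bar u)\|_{\ell^2_{\tilde m}}=\|v-\bar v\|_{\ell^2_{\tilde m}}$. On a bounded set this is Lipschitz, and it can be converted to an $\alpha$-power. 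Indeed, $\|u-\bar u\|_{\tilde{\mathcal H}}\le 2\sqrt{\mathfrak{\tilde M}}\,R_0=:\rho$, and $s\le \rho^{1-\alpha}s^\alpha$ for $s\in[0,\rho]$. So every Lipschitz-type bound can be absorbed into the Hölder one.

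For the velocity part, split for each $i$:
\[
A_i(x,v)-A_i(\bar x,\bar v)=\kappa\sum_j m_j\bigl(\psi_{ij}-\bar\psi_{ij}\bigr)\Gamma(v_j-v_i)+\kappa\sum_j m_j\bar\psi_{ij}\bigl(\Gamma(v_j-v_i)-\Gamma(\bar v_j-\bar v_i)\bigr)=:I_i+II_i .
\]

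\emph{Term $I_i$.} Use the Lipschitz bound on $\psi$ with constant $L_\psi$ and $\|\Gamma(w)\|\le \sqrt d\,(2R_0)^\alpha$. This gives
\[
|\psi_{ij}-\bar\psi_{ij}|\le L_\psi\bigl(\|x_i-\bar x_i\|+\|x_j-\bar x_j\|\bigr),
\]
so that
\[
\|I_i\|\le C\Bigl(\|x_i-\bar x_i\|+\sum_j m_j\|x_j-\bar x_j\|\Bigr).
\]
By Cauchy--Schwarz and $m_j\le\tilde m_j$, $\sum_j m_j\|x_j-\bar x_j\|\le(\sum_j m_j)^{1/2}\|x-\bar x\|_{\ell^2_{\tilde m}}$. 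Squaring, weighting by $\tilde m_i$ and summing gives $\|I\|_{\ell^2_{\tilde m}}\le C\|x-\bar x\|_{\ell^2_{\tilde m}}$, with $C$ depending on $\mathfrak{\tilde M}=2$.

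\emph{Term $II_i$.} Use the scalar Hölder estimate $\bigl||a|^\alpha\mathrm{sgn}(a)-|b|^\alpha\mathrm{sgn}(b)\bigr|\le 2^{1-\alpha}|a-b|^\alpha$ componentwise, together with $0\le\psi\le1$. This yields
\[
\|II_i\|\le C\sum_j m_j\bigl(\|v_j-\bar v_j\|^\alpha+\|v_i-\bar v_i\|^\alpha\bigr).
\]
By Jensen/Hölder with the probability weights $m_j$ (concavity of $s\mapsto s^{\alpha}$ and then $s^{1/2}$),
\[
\sum_j m_j\|v_j-\bar v_j\|^\alpha\le\Bigl(\sum_j m_j\|v_j-\bar v_j\|^2\Bigr)^{\alpha/2}\le\|v-\bar v\|_{\ell^2_{\tilde m}}^\alpha .
\]
For the diagonal piece we need
\[
\sum_i\tilde m_i\|v_i-\bar v_i\|^{2\alpha}\le \mathfrak{\tilde M}^{1-\alpha}\Bigl(\sum_i\tilde m_i\|v_i-\bar v_i\|^2\Bigr)^{\alpha},
\]
which is Jensen with respect to the finite measure $\tilde m$. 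Hence $\|II\|_{\ell^2_{\tilde m}}\le C\|v-\bar v\|^\alpha_{\ell^2_{\tilde m}}$.

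Combining the three estimates, absorbing the Lipschitz terms via the bound $s\le\rho^{1-\alpha}s^\alpha$, and using $\|x-\bar x\|,\|v-\bar v\|\le\|u-\bar u\|_{\tilde{\mathcal H}}$ gives the claim with $L_{R_0}$ depending on $\kappa,d,L_\psi,R_0,\alpha$. The main obstacle is the non-Lipschitz diagonal term $\|v_i-\bar v_i\|^\alpha$ in $II_i$: it is not controlled by a summable average. It must be handled by the Jensen step with respect to the finite measure $\tilde m$, which is exactly where positivity and summability of $\tilde m$ are used. The mismatch between the weights $m$ (appearing in $F$) and $\tilde m$ (in the norm) is harmless because $m_j\le\tilde m_j$.
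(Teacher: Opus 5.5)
Your proposal is correct and follows essentially the same route as the paper. You split the velocity part into a $\psi$-difference term, which is Lipschitz in $\Delta x$ via Cauchy--Schwarz and $m_j\le\tilde m_j$, and a $\Gamma$-difference term, which is H\"older via subadditivity of $s\mapsto s^\alpha$ and the Lyapunov/Jensen inequality on $(\mathbb N,\tilde m)$ for the diagonal piece; you then absorb the Lipschitz contributions using the a priori bound on $\|u-\bar u\|_{\tilde{\mathcal H}}$, exactly as the paper does (your $\rho=2\sqrt{\mathfrak{\tilde M}}R_0$ is even slightly sharper than the paper's $S_0=4\sqrt2\,R_0$, and the Jensen step uses only finiteness of $\mathfrak{\tilde M}$, not positivity of the $\tilde m_i$).
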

\begin{proof} Let $u=(x,v)$ and $\bar u=(\bar x,\bar v)$ satisfy $\|u\|_{\mathcal B},\|\bar u\|_{\mathcal B}\le R_0$.
We set
\[ \Delta x:=x-\bar x, \quad \Delta v:=v-\bar v, \quad \Delta u:=u-\bar u. \]
Then we have
\[
\|\Delta x\|_\infty\le 2R_0, \quad  \|\Delta v\|_\infty\le 2R_0, \quad 
\|\Delta u\|_{\tilde{\mathcal H}}\le 4\sqrt{\mathfrak{\tilde M}}\,R_0 = 4\sqrt{2}\,R_0 =:S_0.
\]
Recall that 
\[ F(u)=(v,\mathcal A(x,v)). \]
Hence, we have
\begin{equation}\label{eq:F-split}
	\begin{aligned}
		\|F(u)-F(\bar u)\|_{\tilde{\mathcal H}}
		&\le \|v-\bar v\|_{\ell_{\tilde m}^2}+\|\mathcal A(x,v)-\mathcal A(\bar x,\bar v)\|_{\ell_{\tilde m}^2}=: {\mathcal I}_{11} + {\mathcal I}_{12}.\end{aligned}
\end{equation}
Below, we estimate the terms ${\mathcal I}_{1i}$ one by one. \newline

\noindent $\bullet$~\textbf{Step A} (Estimate of ${\mathcal I}_{11}$):~Since $\alpha\in(0,1)$ and $\|\Delta u\|_{\tilde{\mathcal H}}\le S_0$, we have
\begin{equation}\label{eq:I1}
	{\mathcal I}_{11}=\|\Delta v\|_{\ell_{\tilde m}^2}\le \|\Delta u\|_{\tilde{\mathcal H}}
	\le S_0^{1-\alpha}\|\Delta u\|_{\tilde{\mathcal H}}^\alpha.
\end{equation}
\noindent $\bullet$~\textbf{Step B} (Estimate of ${\mathcal I}_{12}$):~Let $\|\psi\|_{\mathrm{Lip}}$ be the Lipschitz constant of $\psi$.
Recall $\Gamma:\mathbb R^d\to\mathbb R^d$ is the componentwise sign-power map with exponent $\alpha\in(0,1)$.
Then there exists a constant $C_\Gamma:=2^{1-\alpha}d^{\frac{1-\alpha}{2}}$ such that $ \forall~a,b\in\mathbb R^d$,
\begin{equation}\label{eq:Gamma-Holder}
	\|\Gamma(a)-\Gamma(b)\|\le C_\Gamma \|a-b\|^\alpha, \quad 
	\|\Gamma(a)\|\le C_\Gamma \|a\|^\alpha.
\end{equation}
Since $\|v\|_\infty,\|\bar v\|_\infty\le R_0$, for all $i,j$,
\begin{equation}\label{eq:Gamma-bound2}
	\|\Gamma(v_j-v_i)\|\le C_\Gamma (2R_0)^\alpha, \quad 
	\|\Gamma(\bar v_j-\bar v_i)\|\le C_\Gamma (2R_0)^\alpha.
\end{equation}
For each $i\ge1$, write
\begin{align*}
\begin{aligned}
&\mathcal A_i(x,v)-\mathcal A_i(\bar x,\bar v) \\
& \hspace{1cm} = \kappa \sum_{j\ge1} m_j\Big( \psi_{ij}\Gamma_{ij}-\bar \psi_{ij}\bar\Gamma_{ij}\Big) \\
&\hspace{1cm} = \kappa \sum_{j\ge1} m_jB_{ij}+ \kappa \sum_{j\ge1} m_jC_{ij}=: \kappa(B_i+C_i),
\end{aligned}
\end{align*}
where we used handy notations:
\begin{align*}
\begin{aligned}
& \psi_{ij}:= \psi(\|x_j-x_i\|), \quad {\bar \psi}_{ij}:= \psi(\|\bar x_j-\bar x_i\|), \quad  \Gamma_{ij}:=\Gamma(v_j-v_i), \\
& \bar\Gamma_{ij}:=\Gamma(\bar v_j-\bar v_i), \quad  B_{ij}:=(\psi_{ij}-\bar \psi_{ij})\Gamma_{ij},\quad C_{ij}:=\bar \psi_{ij}(\Gamma_{ij}-\bar\Gamma_{ij}).
\end{aligned}
\end{align*}

\vspace{0.1cm}

\noindent $\diamond$~\textbf{Step B.1} (Bound for $B_i$):~Using the Lipschitz continuity of $\psi$, the inequality
\[ \big|\|x_j-x_i\|-\|\bar x_j-\bar x_i\|\big|\le \|\Delta x_j\|+\|\Delta x_i\|, \]
and \eqref{eq:Gamma-bound2}, we obtain
\[
\|B_{ij}\|
\le \|\psi\|_{\mathrm{Lip}}(\|\Delta x_j\|+\|\Delta x_i\|)\, C_\Gamma(2R_0)^\alpha.
\]
Hence, we have
\begin{align*}
	\|B_i\|
	&\le  C_x\sum_{j\ge1}m_j(\|\Delta x_j\|+\|\Delta x_i\|)=  C_x\Big(\sum_{j\ge1}m_j\|\Delta x_j\|+ \|\Delta x_i\|\Big),
\end{align*}
where the positive constant $C_x$ is given as follows.
\[ C_x:=\|\psi\|_{\mathrm{Lip}}\,C_\Gamma(2R_0)^\alpha. \]
Since $m_j\le \tilde m_j$, by the Cauchy-Schwarz inequality, we have
\begin{align*}
	\sum_{j\ge1}m_j\|\Delta x_j\|&\le
	\Big(\sum_{j\ge1}m_j\Big)^{1/2}\Big(\sum_{j\ge1}m_j\|\Delta x_j\|^2\Big)^{1/2}\le \|\Delta x\|_{\ell^2_{\tilde m}},
\end{align*}
where we use the relation $\sum_{j} m_j = 1$. Thus, we have
\begin{equation}\label{eq:Bi}
	\|B_i\|\le  C_x\Big( \|\Delta x\|_{\ell^2_{\tilde m}}+ \|\Delta x_i\|\Big).
\end{equation}

\vspace{0.1cm}

\noindent $\diamond$~\textbf{Step B.2} (Bound for $C_i$):~Since $0\le \bar \psi_{ij}\le \|\psi\|_{L^\infty}$, by \eqref{eq:Gamma-Holder},
\[
\|C_{ij}\|
\le  \|\psi \|_{L^\infty} C_\Gamma\,\|\Delta v_j-\Delta v_i\|^\alpha.
\]
Using $(a+b)^\alpha\le a^\alpha+b^\alpha$ for $a,b\ge0$ and $\alpha\in(0,1)$, we obtain
\[
\|\Delta v_j-\Delta v_i\|^\alpha\le \|\Delta v_j\|^\alpha+\|\Delta v_i\|^\alpha.
\]
Hence we have
\begin{equation}\label{eq:Ci}
	\begin{aligned}
		\|C_i\|
		&\le   C_v\Bigg(\sum_{j\ge1}m_j(\|\Delta v_j\|^\alpha+\|\Delta v_i\|^\alpha)\Bigg)\le C_v \Big( \|\Delta v\|_{\ell^2_{\tilde m}}^\alpha+ \|\Delta v_i\|^\alpha\Big),
	\end{aligned}
\end{equation}
where $C_v:=\| \psi \|_{L^\infty} C_\Gamma$. Here we use the fact from the Lyapunov's inequality on finite measure space $(\mathbb N,m)$  and $\sum_{j} m_j = 1$ that
\begin{align*}
	\sum_{j\ge1}m_j\|\Delta v_j\|^\alpha
	&\le \Big(  \sum_{j} m_j    \Big)^{1-\alpha/2}\Big(\sum_{j\ge1}m_j\|\Delta v_j\|^2\Big)^{\alpha/2}\le \|\Delta v\|_{\ell^2_{\tilde m}}^\alpha.
\end{align*}
\noindent  $\diamond$~\textbf{Step B.3} (Estimate of ${\mathcal I}_{12})$: By Minkowski's inequality in $\ell^2_{\tilde m}$, we have
\begin{equation}\label{I_2}
	{\mathcal I}_{12} =\|\mathcal A(x,v)-\mathcal A(\bar x,\bar v)\|_{\ell^2_{\tilde m}}
	\le \kappa \Big( \|B\|_{\ell^2_{\tilde m}}+\|C\|_{\ell^2_{\tilde m}} \Big),
\end{equation}
where $B=(B_i)_{i\ge1}$ and $C=(C_i)_{i\ge1}$.\newline

\noindent$\clubsuit$ (Bound for $\|B\|_{\ell^2_{\tilde m}}$):~We use \eqref{eq:Bi} and $(a+b)^2\le 2(a^2+b^2)$ to find 
\begin{align*}
	\|B\|_{\ell_{\tilde m}^2}
	&=\left(\sum_{i\ge1}\tilde m_i\|B_i\|^2\right)^{\frac{1}{2}}\le \sqrt{2}C_x\left(\sum_{i\ge1}\tilde m_i\Big( \|\Delta x\|_{\ell^2_{\tilde m}}^2+ \|\Delta x_i\|^2\Big)\right)^{\frac{1}{2}}\\
    &\hspace{0.4cm}=\sqrt{2}\,C_x\sqrt{3}\,\|\Delta x\|_{\ell^2_{\tilde m}}.
\end{align*}
\noindent$\clubsuit$ (Bound for $\|C\|_{\ell^2_{\tilde m}}$): It follows from \eqref{eq:Ci} that 
\begin{align*}
\begin{aligned}
	\|C\|_{\ell^2_{\tilde m}}^2
	&\le 2C_v^2\sum_{i\ge1}\tilde m_i\Big( \|\Delta v\|_{\ell^2_{\tilde m}}^{2\alpha}+ \|\Delta v_i\|^{2\alpha}\Big)\\
	&=2C_v^2\Big( \mathfrak{\tilde{M} }\|\Delta v\|_{\ell^2_{\tilde m}}^{2\alpha}
	+ \sum_{i\ge1}\tilde m_i\|\Delta v_i\|^{2\alpha}\Big)  \\
	&= 2C_v^2\Big( 2\|\Delta v\|_{\ell^2_{\tilde m}}^{2\alpha}
	+ \sum_{i\ge1}\tilde m_i\|\Delta v_i\|^{2\alpha}\Big).
\end{aligned}
\end{align*}	
Since $2\alpha<2$ and $\mathfrak{\tilde M} = 2$, Lyapunov's inequality on $(\mathbb N,\tilde m)$ gives
\[
\sum_{i\ge1}\tilde m_i\|\Delta v_i\|^{2\alpha}
\le \mathfrak{\tilde{M}}^{1-\alpha} \|\Delta v\|_{\ell^2_{\tilde m}}^{2\alpha} = 2^{1-\alpha} \|\Delta v\|_{\ell^2_{\tilde m}}^{2\alpha}.
\]
Therefore, we have
\begin{equation*}\label{eq:Cnorm}
	\|C\|_{\ell^2_{\tilde m}}
	\le \sqrt{2}\,C_v\sqrt{ 2+2^{1-\alpha}}\ \|\Delta v\|_{\ell^2_{\tilde m}}^{\alpha} = \sqrt{2}\,C_v \sqrt{2 + 2^{1-\alpha}}  \|\Delta v\|_{\ell^2_{\tilde m}}^{\alpha}.
\end{equation*}
We go back to \eqref{I_2} and obtain
\begin{equation}\label{eq:I2-final}
	{\mathcal I}_{12} \le \kappa \Big( K_1\|\Delta x\|_{\ell^2_{\tilde m}}+K_2\|\Delta v\|_{\ell^2_{\tilde m}}^\alpha \Big),
\end{equation}
with
\[
K_1:=\sqrt{2}\,C_x\sqrt{3},\quad K_2:=\sqrt{2}\,C_v\sqrt{2 + 2^{1-\alpha}}.
\]
Finally, since $\|\Delta x\|_{\ell^2_{\tilde m}}\le \|\Delta u\|_{\tilde{\mathcal H}}$ and $\|\Delta u\|_{\tilde{\mathcal H}}\le S_0$,
\[
\|\Delta x\|_{\ell^2_{\tilde m}}
\le S_0^{1-\alpha}\|\Delta u\|_{\tilde{\mathcal H}}^\alpha,
\qquad
\|\Delta v\|_{\ell^2_{\tilde m}}^\alpha\le \|\Delta u\|_{\tilde{\mathcal H}}^\alpha.
\]
Thus, by \eqref{eq:I2-final}, one has
\begin{equation}\label{eq:I2-Holder}
{\mathcal I}_{12} \le \kappa \big(K_1S_0^{1-\alpha}+K_2\big)\|\Delta u\|_{\tilde{\mathcal H}}^\alpha.
\end{equation}

\vspace{0.1cm}

\noindent $\bullet$~\textbf{Step C}. (Wrap-up): We combine \eqref{eq:F-split}, \eqref{eq:I1} and \eqref{eq:I2-Holder} to obtain
\begin{align*}
	\|F(u)-F(\bar u)\|_{\tilde{\mathcal H}}
	&\le \Big[  S_0^{1-\alpha}+ \kappa \Big(K_1S_0^{1-\alpha}+K_2 \Big) \Big ] \|\Delta u\|_{\tilde{\mathcal H}}^\alpha=:L_{R_0}\|\Delta u\|_{\tilde{\mathcal H}}^\alpha,
\end{align*}
where $L_{R_0}>0$ depends only on $R_0$ and the parameters
$d, \kappa, \|\psi\|_{\mathrm{Lip}},\|\psi\|_{L^\infty}$ and $\alpha$.
\end{proof}
\begin{lemma}\label{F}
Suppose that $\psi$ and $\Gamma$ satisfy $(\mathcal{A}_1)-(\mathcal{A}_2)$. Then the map $F:\mathcal B\to\mathcal B$ maps bounded sets into bounded sets:
	\begin{equation*}\label{growth}
		\|F(u)\|_{\mathcal{B}}\le \|v\|_\infty+  2^{\alpha} \kappa \|\psi\|_{L^\infty} \|v\|^{\alpha}_\infty,\quad \forall~u=(x,v)\in \mathcal{B}.
	\end{equation*}
\end{lemma}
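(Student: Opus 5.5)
The statement (Lemma \ref{F}) is a pointwise bound, so I would prove it directly from the definition \eqref{AB-0} of $F$, estimating the position block and the velocity block of $F(u)$ separately in $\ell^\infty$. Since $\|\cdot\|_{\mathcal B}$ is the sum of the $\ell^\infty$ norms of the two blocks, I would write
\[
\|F(u)\|_{\mathcal B}=\|v\|_\infty+\sup_{i\ge1}\|A_i(x,v)\|,
\]
where $A_i$ is the $i$-th velocity component in \eqref{AB-0}. The first term already matches the target, so everything reduces to a uniform-in-$i$ bound on $\|A_i(x,v)\|$.

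To bound $A_i$, I would use the triangle inequality in the (absolutely convergent) series together with $0\le\psi\le\|\psi\|_{L^\infty}$:
\[
\|A_i(x,v)\|\le \kappa\|\psi\|_{L^\infty}\sum_{j\ge1}m_j\,\|\Gamma(v_j-v_i)\|.
\]
Next I need a pointwise bound for $\Gamma$. Componentwise, $|\Gamma^{(k)}(w^k)|=|w^k|^\alpha\le\|w\|^\alpha$. Taking $w=v_j-v_i$ gives $\|w\|\le 2\|v\|_\infty$, so each component of $\Gamma(v_j-v_i)$ is at most $2^\alpha\|v\|_\infty^\alpha$ in absolute value. Summing over $j$ with $\sum_j m_j=1$ and then taking the supremum over $i$ yields the claimed estimate.

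The only genuine obstacle is the dimensional factor in the Euclidean norm of $\Gamma$. Componentwise bounds give
\[
\|\Gamma(w)\|=\Bigl(\sum_k|w^k|^{2\alpha}\Bigr)^{1/2},
\]
which is at most $d^{1/2}\|w\|^\alpha$ naively, or $d^{(1-\alpha)/2}\|w\|^\alpha$ by the power-mean inequality; this is exactly the constant $C_\Gamma$-type factor in \eqref{eq:Gamma-Holder}. The stated constant $2^\alpha\kappa\|\psi\|_{L^\infty}$ therefore appears to drop this factor. I would handle it in one of two ways: either read $\|\cdot\|_\infty$ in the velocity block componentwise (max over $k$), in which case the bound holds exactly as written, or insert the factor $d^{(1-\alpha)/2}$ into the constant. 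The purpose of the lemma is only to verify \eqref{eq:lin-growth} for Theorem \ref{thm:global-wp}, and that purpose is unaffected by this constant.

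To finish, I would note that $\|v\|_\infty^\alpha\le 1+\|v\|_\infty$, which converts the bound into the affine form $C_1+C_2\|u\|_{\mathcal B}$ with $C_1=C_2=1+2^\alpha\kappa\|\psi\|_{L^\infty}$ (times the dimensional factor if included). This is exactly what \eqref{eq:lin-growth} requires.
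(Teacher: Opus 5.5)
Your argument is the paper's own: split off the position block $\|v\|_\infty$, then bound $\|A_i(x,v)\|$ using $\psi\le\|\psi\|_{L^\infty}$, $\|v_j-v_i\|\le 2\|v\|_\infty$ and $\sum_j m_j=1$. Your caveat about the dimensional factor is correct and exposes an error in the paper's proof and statement. The paper uses $\bigl(\sum_k|v_j^k-v_i^k|^{2\alpha}\bigr)^{1/2}\le\|v_j-v_i\|^\alpha$, which runs the other way for $\alpha<1$ and $d\ge2$. For a concrete failure, take $d=2$, $m_1=1$, all $x_i=0$, $v_1=(R,R)$ and $v_i=(-R,-R)$ for $i\ge2$; then $\sup_i\|A_i\|=2^{(1-\alpha)/2}\,2^\alpha\kappa\|\psi\|_{L^\infty}\|v\|_\infty^\alpha$, so the stated bound fails. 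Your constant $2^\alpha d^{(1-\alpha)/2}\kappa\|\psi\|_{L^\infty}$, or the max-norm reading, is the right fix and is harmless for \eqref{eq:lin-growth}.
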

\begin{proof}  
Let $u=(x,v)\in\mathcal B$. Then, it follows from \eqref{AB-0} that 
\begin{align}
\begin{aligned}  \label{AB-1}
& F(u) = F_1(u) + F_2(u), \quad  F_1(u) = (v, 0),  \quad F_2(u) = ( 0, A(x,v)), \\
& A_i(x,v) := \kappa \sum_{j=1}^{\infty} m_j \psi(\|x_{j}-x_i\|)\Gamma(v_j-v_i), \quad \forall~i \in {\mathbb N}.
\end{aligned}
\end{align}
 Recall that 
 \[ \| F(u) \|_{{\mathcal B}} = \|F_1(u) \|_{\infty} +  \|F_2(u) \|_{\infty}. \]
 Next, we estimate the terms $ \|F_1(u) \|_{\infty}$ and $ \|F_2(u) \|_{\infty}.$ \newline
 
Since  $F_1(u)= (v, 0)$, we have
\[
\|F_1(u)\|_\infty=\|v\|_\infty.
\]
Next, we consider the second component $F_2(u)$ in \eqref{AB-1}. Note that for all $i,j\in\mathbb N$,
\[
\|v_j-v_i\|\le \|v_j\|+\|v_i\|\le2\|v\|_\infty.
\]
Hence, since $0< \psi \le \|\psi\|_{L^\infty}$ and
$\Gamma$ is defined as \eqref{eq:Gamma}, it follows that for each $i\in\mathbb N$,
\begin{align*}
\begin{aligned}
	\|(F_2(u))_i\|
	&\le \kappa \sum_{j=1}^\infty m_j \psi(\|x_j-x_i\|) \|\Gamma(v_j-v_i)\| \\
	&\le \kappa \|\psi\|_{L^\infty}\sum_{j=1}^\infty m_j\left(\sum_{k=1}^{d}|v_j^{k}-v_i^{k}|^{2\alpha}\right)^{\frac{1}{2}}\\
	&\le \kappa \|\psi\|_{L^\infty}\sum_{j=1}^\infty m_j\|v_j-v_i\|^{\alpha}  \\
	&\le \kappa \|\psi\|_{L^\infty} {\mathfrak M} 2^{\alpha}\|v\|_\infty^{\alpha} =  \kappa 2^{\alpha} \|\psi\|_{L^\infty} \|v\|_\infty^{\alpha},
\end{aligned}
\end{align*}
where we used ${\mathfrak M} = 1$.  We take the supremum over $i \in {\mathbb N}$ to get 
\[
\|F_2(u)\|_\infty\le \kappa 2^{\alpha} \|\psi \|_{L^\infty}  \|v\|_\infty^{\alpha}.
\]
Therefore, we have
\[
\|F(u)\|_{\mathcal B}
\le \|v\|_\infty+   \kappa 2^{\alpha} \|\psi \|_{L^\infty}  \|v\|_\infty^{\alpha}.
\]
\end{proof}
\subsection{Proof of Theorem \ref{T2.1}} \label{App-B-2}
Consider the Cauchy problem for \eqref{ICS}:
\begin{equation} 
\begin{cases} \label{AB-2}
\displaystyle \dot u(t)=F(u(t)),\quad t > 0, \vspace{6pt}\\
\displaystyle  u(0)= (x^{\rm in},v^{\rm in})\in\mathcal B.
\end{cases}
\end{equation}
Let $\alpha\in(0,1)$ and let $\{m_i\}_{i\ge1}\subset[0,\infty)$ such that $\sum_{i=1}^\infty m_i = 1.$  Suppose that $\psi$ and $\Gamma$ satisfy $(\mathcal{A}_1)-(\mathcal{A}_2)$. 
Then, we claim that there exists a global solution $u$ to \eqref{AB-2} such that 
\[ u\in C([0,\infty);\mathcal B)\cap C^1((0,\infty);\mathcal H).  \]
Define 
\[ {\tilde m}_i:=m_i+2^{-i}, \quad \mathfrak{\tilde M} :=\sum_{i=1}^\infty \tilde m_i= 2, \quad
\tilde{\mathcal{H}}:= \ell_{\tilde{m}}^2(\mathbb{R}^d)\times \ell_{\tilde{m}}^2(\mathbb{R}^d). \]
In the sequel, we verify the assumptions \eqref{eq:F-holder-general} and \eqref{eq:lin-growth} in Theorem~\ref{thm:global-wp}.\newline

\noindent $\bullet$~(Verification of \eqref{eq:F-holder-general}):~By Lemma~\ref{lem:F-holder}, for every $R_0>0$ there exists $L_{R_0}>0$ such that if 
\[ u,\bar u\in\mathcal B,\ \|u\|_{\mathcal B} \leq R_0, \quad \|\bar u\|_{\mathcal B}\le R_0, \]
one has 
\[
\|F(u)-F(\bar u)\|_{\tilde{\mathcal H}}
\le L_{R_0}\,\|u-\bar u\|_{\tilde{\mathcal H}}^{\alpha}.
\]
Thus, the relation  \eqref{eq:F-holder-general} holds with $\theta=\alpha$. \newline

\noindent $\bullet$~(Verification of  \eqref{eq:lin-growth}):~Since $y^\alpha\le 1+y$ for  $\alpha\in(0,1)$ and $ y\ge0$, by Lemma~\ref{F},  $\forall\,u=\!(x,v)\!\in\mathcal B,$ we have
\[
\|F(u)\|_{\mathcal B}\le  2^\alpha \|\psi\|_{L^\infty} +(1+ 2^\alpha \|\psi\|_{L^\infty}) \|v\|_\infty^\alpha.
\]
Hence, the assumption  \eqref{eq:lin-growth} holds with 
\[ C_1=  2^{\alpha} \|\psi\|_{L^\infty} \quad \mbox{and} \quad C_2 =1+  2^{\alpha} \|\psi\|_{L^\infty}. \]
Then, we apply Theorem~\ref{thm:global-wp} to get the existence of a global solution
\[ u=(x,v)\in C([0,\infty);\mathcal B)\cap C^1((0,\infty);\tilde{\mathcal H}) \quad \mbox{satisfying \eqref{ICS}}. \]
Since $0\le m_i\le \tilde m_i$ for all $i$, we have the continuous embedding
$\tilde{\mathcal H}\hookrightarrow \mathcal H$, and moreover
\[
\|w\|_{\mathcal H}^2=\sum_{i\ge1} m_i\|w_i\|^2
\le \sum_{i\ge1}\tilde m_i\|w_i\|^2=\|w\|_{\tilde{\mathcal H}}^2, \quad  \forall\,w\in\tilde{\mathcal H}.
\]
Hence, the convergence in $\tilde{\mathcal H}$ implies the convergence in $\mathcal H$.
Since $u\in C^1((0,\infty);\tilde{\mathcal H})$, for every $t>0$, one has 
\[
\frac{u(t+h)-u(t)}{h}\to u'(t)\quad\text{in }\tilde{\mathcal H}\ \text{as }h\to0,
\]
and therefore also in $\mathcal H$. This shows $u\in C^1((0,\infty);\mathcal H)$.

\vspace{0.3cm}

\section{Compact embedding}   \label{App-C}
\setcounter{equation}{0}
In this appendix, we provide a version of compact embedding. 

\begin{lemma}[Compact embedding]\label{lem:compact-embed}
	Let $\{m_i\}_{i\ge1}\subset [0,\infty)$ satisfy $\sum_{i=1}^\infty m_i  = 1$. Then, the following assertions hold.
\begin{enumerate}
\item	
The embedding $\ell^\infty(\mathbb R^d)\hookrightarrow \ell_m^2(\mathbb R^d)$ is compact.
\vspace{0.1cm}
\item
The embedding $\mathcal B=\ell^\infty(\mathbb R^d)\times \ell^\infty(\mathbb R^d)
	\ \hookrightarrow\
	\mathcal H=\ell_m^2(\mathbb R^d)\times \ell_m^2(\mathbb R^d)$ 
	is compact.
\end{enumerate}	
\end{lemma}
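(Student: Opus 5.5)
The plan is to prove (1) directly by a tail-truncation argument and then to deduce (2) from it. Continuity of the embedding is immediate: for $w\in\ell^\infty(\mathbb R^d)$ we have
\[
\|w\|_{\ell^2_m}^2=\sum_i m_i\|w_i\|^2\le \Big(\sum_i m_i\Big)\|w\|_\infty^2=\|w\|_\infty^2 ,
\]
so the inclusion $\iota:\ell^\infty(\mathbb R^d)\to\ell^2_m(\mathbb R^d)$ is bounded with norm at most one. (If some $m_i=0$, then $\ell^2_m$ is only a seminormed space. Either we pass to the quotient, or we observe that the argument below never uses definiteness; the auxiliary weight $\tilde m_i=m_i+2^{-i}$ used in Appendix~\ref{App-B} is strictly positive in any case, and the same proof applies to any summable weight.)

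For compactness, take a bounded sequence $(w^n)_n\subset\ell^\infty(\mathbb R^d)$ with $\|w^n\|_\infty\le R$; I would show it has a Cauchy subsequence in $\ell^2_m$. Each coordinate sequence $(w^n_i)_n$ lies in the closed ball of radius $R$ in $\mathbb R^d$. By Bolzano--Weierstrass and a Cantor diagonal extraction, there is a subsequence, still denoted $w^n$, such that $w^n_i\to w_i$ for every fixed $i$, with $\|w_i\|\le R$.

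The key step is to control the tail of the series uniformly in $n$, using the summability of $(m_i)$. Given $\varepsilon>0$, choose $N$ such that $\sum_{i>N}m_i<\varepsilon/(8R^2)$. Then for all $n,p$,
\[
\sum_{i>N}m_i\|w^n_i-w^p_i\|^2\le 4R^2\sum_{i>N}m_i<\varepsilon/2 .
\]
The finitely many head terms $\sum_{i\le N}m_i\|w^n_i-w^p_i\|^2$ tend to $0$ as $n,p\to\infty$ by coordinatewise convergence, so they are below $\varepsilon/2$ for $n,p$ large. Hence the subsequence is Cauchy in $\ell^2_m$. Moreover $w=(w_i)\in\ell^\infty\subset\ell^2_m$, and the same splitting shows $w^n\to w$ in $\ell^2_m$. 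Equivalently, $\iota$ is the operator-norm limit of the finite-rank truncations $P_N w=(w_1,\dots,w_N,0,\dots)$, since
\[
\|\iota-P_N\|\le \Big(\sum_{i>N}m_i\Big)^{1/2}\to 0 ,
\]
and an operator-norm limit of finite-rank operators is compact. Either formulation proves (1).

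Part (2) follows from (1). A bounded sequence in $\mathcal B$ has both components bounded in $\ell^\infty$. I would extract a convergent subsequence in the first component, then a further subsequence in the second, which gives convergence in the sum norm $\|\cdot\|_{\mathcal H}$.

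I expect no real obstacle here. The only points requiring care are the uniform tail bound, which is exactly where $\sum_i m_i<\infty$ enters, and the diagonal extraction. The degenerate case $m_i=0$ needs only the remark made above.
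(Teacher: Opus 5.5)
Your proposal is correct and follows essentially the same route as the paper: a coordinatewise diagonal extraction, a tail bound uniform in $n$ that comes from $\sum_i m_i<\infty$, the finitely many head terms handled by pointwise convergence, and part (2) obtained by taking a common subsequence for both components. Your extra remarks are also valid and slightly strengthen the paper's presentation: the finite-rank approximation with $\|\iota-P_N\|\le(\sum_{i>N}m_i)^{1/2}$, the seminorm issue when some $m_i=0$, and the observation that the argument works for any summable weight, such as the weight $\tilde m$ with total mass $2$ to which the lemma is actually applied in Appendix~\ref{App-A}.
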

\begin{proof}
\noindent (i)~Let $\{v^n\}_{n\in\mathbb N}\subset \ell^\infty(\mathbb R^d)$ be bounded, i.e., there exists $R>0$ such that
\[
\sup_{n\in\mathbb N}\|v^n\|_\infty \le R.
\]
We set $v^n=(v_i^n)_{i\ge1}$. For each fixed $i\ge1$, the sequence $\{v_i^n\}_{n}$ is bounded in $\mathbb R^d$,
hence it admits a convergent subsequence. By a diagonal argument, there exist a subsequence (still denoted by $\{v^n\}$)
and $v=(v_i)_{i\ge1}\in \ell^\infty(\mathbb R^d)$ such that
\[
v_i^n \to v_i\quad \text{as }n\to\infty,\qquad \forall~i\ge1.
\]
We claim that 
\begin{equation} \label{AC-1}
\mbox{$v^n\to v$ converges strongly in $\ell_m^2(\mathbb R^d)$}. 
\end{equation}
{\it Proof of \eqref{AC-1}}:~Fix $\varepsilon>0$, and we choose $N\in\mathbb N$ such that
\[
\sum_{i>N} m_i < \frac{\varepsilon^2}{16R^2}.
\]
Then, for all $n$,
\begin{equation} \label{AC-2}
\sum_{i>N} m_i\|v_i^n-v_i\|^2
\le \sum_{i>N} m_i (2R)^2
=4R^2\sum_{i>N}m_i
<\frac{\varepsilon^2}{4}.
\end{equation}
On the other hand, since $v_i^n\to v_i$ for each $1\le i\le N$ and the sum is finite, we have
\[
\lim_{n\to \infty}\sum_{i=1}^N m_i\|v_i^n-v_i\|^2 =0.\]
Hence there exists $n_0$ such that for all $n\ge n_0$,
\begin{equation*} \label{AC-3}
\sum_{i=1}^N m_i\|v_i^n-v_i\|^2 < \frac{\varepsilon^2}{4}.
\end{equation*}
Finally, we combine \eqref{AC-1} and \eqref{AC-2} to find that for all $n\ge n_0$,
\[
\|v^n-v\|_{\ell_m^2}^2
=\sum_{i=1}^\infty m_i\|v_i^n-v_i\|^2
<\frac{\varepsilon^2}{4}+\frac{\varepsilon^2}{4}
=\frac{\varepsilon^2}{2}.
\]
This implies 
\[ \|v^n-v\|_{\ell_m^2}<\varepsilon \quad n \gg 1. \]
This proves that every bounded sequence in
$\ell^\infty(\mathbb R^d)$ has a strongly convergent subsequence in $\ell_m^2(\mathbb R^d)$, i.e., the embedding is compact. \newline

\noindent (ii)~
Finally, for the product spaces $\mathcal B$ and $\mathcal H$, let $u^n=(x^n,v^n)$ be bounded in $\mathcal B$.
Applying the above compactness to $\{x^n\}$ and $\{v^n\}$ separately and taking a common subsequence, we obtain
$x^n\to x$ and $v^n\to v$ strongly in $\ell_m^2(\mathbb R^d)$. Hence $u^n\to (x,v)$ strongly in $\mathcal H$.
Therefore, $\mathcal B\hookrightarrow \mathcal H$ is compact.
\end{proof}

\end{document}